\providecommand\@dotsep{5}
\def\listtodoname{List of Todos}
\def\listoftodos{\@starttoc{tdo}\listtodoname}
\numberwithin{equation}{section}
\newtheorem{theorem}{Theorem}[section]
\newtheorem{lemma}[theorem]{Lemma}
\newtheorem{corollary}[theorem]{Corollary}
\newtheorem{example}[theorem]{Example}
\newtheorem{remark}[theorem]{Remark}
\begin{document}

	\title [Norms and Non-Equivalence in Infinite-Dimensional Banach Spaces]{Norms and Non-Equivalence in Infinite-Dimensional Banach Spaces}
	
	\author{Renan J. S. Isneri}
	\author{Josias V. Baca}
	\author{Lucas M. Fernandes}
	
	\address[Renan J. S. Isneri]
	{\newline\indent Unidade Acad\^emica de Matem\'atica
		\newline\indent 
		Universidade Federal de Campina Grande,
		\newline\indent
		58429-970, Campina Grande - PB - Brazil}
	\email{\href{renanisneri@mat.ufcg.edu.br}{renanisneri@mat.ufcg.edu.br}}
	
	\address[Josias V. Baca]
	{\newline\indent Unidade Acad\^emica de Matem\'atica
		\newline\indent 
		Universidade Federal de Campina Grande,
		\newline\indent
		58429-970, Campina Grande - PB - Brazil}
	\email{\href{jverabaca@gmail.com}{jverabaca@gmail.com}}
	
	\address[Lucas M. Fernandes]
	{\newline\indent Departamento de Matem\'atica
		\newline\indent 
		Universidade Regional do Cariri,
		\newline\indent
		63150-000, Campos Sales - CE - Brazil}
	\email{\href{lucas.mfernandes@urca.br}{lucas.mfernandes@urca.br}}
	
\pretolerance10000
	
\begin{abstract}
	\noindent This work explores the interaction between different norms in infinite-dimensional vector spaces, focusing on their impact on Banach space structures and topological properties. We examine norms induced by bijective linear maps, the existence of non-equivalent norms in Banach spaces, and the role of Hamel bases in normed spaces. 
\end{abstract}
	
\subjclass[2021]{Primary: 46B20; 46B25; 46B26. Secondary: 03E75; 15A03; 15A04.} 
\keywords{Hamel basis; Dimension; Equivalent norms; Banach spaces; Separable space.}
	
\maketitle
	
\section{Introduction}

One of the first questions in Functional Analysis is: {\it Given a real or complex vector space, can we define a norm on it?} The answer is yes, and the construction is based on the existence of a Hamel basis. In fact, if $E$ is a nontrivial real or complex vector space, Zorn's Lemma ensures that there is a Hamel basis $\mathcal{B}$ for $E$ (see \cite[Theorem 2.42]{Knapp}). In true, by Proposition 2.1 of \cite{Bartoszynski}, every Banach space $E$ over the real or complex field possesses $2^{|E|}$ distinct Hamel bases, where $|E|$ denotes the cardinality of $E$. This means that any vector $u \in E$ can be uniquely written as a finite sum  
$$
u =\sum_{k=1}^{n} \alpha_k e_k= \alpha_{1} e_{1} + \cdots + \alpha_{n} e_{n},
$$  
where $\alpha_1, \dots, \alpha_n$ are scalars, and $e_1, \dots, e_n$ are vectors in the basis $\mathcal{B}$. Using this representation, we can define norms on $E$ as follows. For $1 \leq p < \infty$, we set:  
\begin{equation}\label{p-norm}
	\| u \|_{p} = \left( \sum_{k=1}^{n} |\alpha_k|^p  \right)^{1/p}, \quad  u \in E.
\end{equation}
For $p = \infty$, we use: 
\begin{equation}\label{infty-norm}
	\| u \|_{\infty} = \max_{ 1 \leq k \leq n} |\alpha_k|, \quad  u \in E.
\end{equation} 
From now on, we will refer to the norm $\| \cdot \|_p$ on $E$ as the {\it $p$-norm}. In particular, when $p = 2$, the norm $\| \cdot \|_2$ defines an inner product on $E$. Although this shows that any vector space can be given a norm, it does not necessarily mean that $E$ becomes a Banach space. This leads to a natural question: {\it What conditions must $E$ satisfy for $( E, \| \cdot \|_p )$ to be a Banach space?} If $E$ has finite dimension, then it is always a Banach space, since any finite-dimensional normed space is Banach \cite[Theorem 2.4-2]{Kreyzig}. However, in infinite dimension, the answer is not so simple. One of the goals of this work is to prove the following result:  

\begin{theorem}\label{theorem1.1}  
	Let $E$ be a real or complex vector space and $1 \leq p \leq \infty$. Then $( E , \| \cdot \|_p )$ is a Banach space if and only if $E$ has finite dimension.
\end{theorem}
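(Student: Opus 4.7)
The plan is to dispatch the easy direction first: if $\dim E<\infty$, then $(E,\|\cdot\|_p)$ is automatically Banach by the classical fact that every finite-dimensional normed space is complete (already cited above). The substance lies in the converse, and my strategy will be to assume $\dim E=\infty$ and exhibit a Cauchy sequence in $(E,\|\cdot\|_p)$ with no limit in $E$.

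To build such a sequence, I would fix a Hamel basis $\mathcal{B}$ of $E$---the very basis with respect to which $\|\cdot\|_p$ is defined in \eqref{p-norm}--\eqref{infty-norm}---and extract from it a countable linearly independent family $\{e_k\}_{k\in\mathbb{N}}$, which is possible precisely because $\mathcal{B}$ is infinite. My candidate is
\[
u_n = \sum_{k=1}^{n} 2^{-k}\, e_k \in E, \qquad n\in\mathbb{N}.
\]
Since the $e_k$ lie in $\mathcal{B}$, this display is literally the Hamel expansion of $u_n$, and a direct application of \eqref{p-norm}--\eqref{infty-norm} gives, for $n>m$,
\[
\|u_n-u_m\|_p = \Bigl(\sum_{k=m+1}^{n} 2^{-kp}\Bigr)^{1/p} \text{ if } p<\infty, \qquad \|u_n-u_m\|_\infty = 2^{-(m+1)}.
\]
Both bounds tend to $0$ as $m,n\to\infty$ by the geometric series, so $(u_n)$ is Cauchy for every $1\leq p\leq \infty$.

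The decisive step will be to rule out convergence to an element of $E$. Suppose $u_n\to u$ in $(E,\|\cdot\|_p)$ for some $u\in E$; by the defining property of a Hamel basis, $u$ has a \emph{finite} expansion $u=\sum_{k\in F}\lambda_k e_k+\sum_{j=1}^{N}\mu_j b_j$ with $F\subset\mathbb{N}$ finite and $b_1,\dots,b_N\in\mathcal{B}\setminus\{e_k:k\in\mathbb{N}\}$. For every $n>\max F$, the Hamel expansion of $u_n-u$ carries the coefficient $2^{-k}$ on each $e_k$ with $k\leq n$ and $k\notin F$; reading the $p$-norm off these coefficients yields
\[
\|u_n-u\|_p \geq \Bigl(\sum_{\substack{k\leq n\\ k\notin F}} 2^{-kp}\Bigr)^{1/p} \text{ if } p<\infty, \qquad \|u_n-u\|_\infty \geq 2^{-k_0},
\]
where $k_0=\min(\mathbb{N}\setminus F)$. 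Neither lower bound tends to $0$, so $u_n\not\to u$, a contradiction.

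The main obstacle will be purely bookkeeping: organising the Hamel expansion of $u_n-u$ cleanly enough that the untouchable coefficients $2^{-k}$ for $k\notin F$ remain visible in the final $p$-norm estimate. Conceptually, the heart of the theorem---and the reason it works---is the clash between the finite support forced by any Hamel representation and the infinitely supported, geometrically decaying ``tail'' that $(u_n)$ accumulates: no vector of $E$ can absorb it, so $(E,\|\cdot\|_p)$ must miss its natural Cauchy limit and fail to be Banach.
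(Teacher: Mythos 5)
Your proof is correct, and it takes a more direct route than the paper's. The paper first constructs an isometric isomorphism $T:(C_c(I),\|\cdot\|_{\ell^p})\to(E,\|\cdot\|_p)$, where $C_c(I)$ is the space of finitely supported functions on the index set of the basis, and then shows (Lemma \ref{lemma4.1}) that $C_c(I)$ is not closed in the complete space $\ell^p(I)$ --- exhibiting partial sums of the series $\sum_m m^{-2/p}\chi_{\{k_m\}}$ converging to a limit of infinite support --- so it cannot be Banach; the case $p=\infty$ is then handled separately via Theorem \ref{theorem4.3} on continuity of the coordinate functionals. You instead work entirely inside $E$: the partial sums $u_n=\sum_{k\le n}2^{-k}e_k$ are Cauchy by a geometric-series estimate, and the hypothetical limit is excluded by reading off the Hamel coefficients of $u_n-u$ and noting that the $p$-norm of $u_n-u$ is bounded below by $2^{-k_0}$ with $k_0=\min(\mathbb{N}\setminus F)$, since dropping terms from the sum in \eqref{p-norm} (or taking one term of the maximum in \eqref{infty-norm}) only decreases the norm. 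The underlying obstruction is identical --- a Cauchy sequence whose ``limit'' would need infinite support --- but your version avoids introducing $\ell^p(I)$, the counting measure, and the transfer lemma, and it treats $1\le p<\infty$ and $p=\infty$ uniformly. What you give up is the isometry with $C_c(I)$ itself, which the paper reuses to characterize separability of $(E,\|\cdot\|_p)$ in the subsequent corollary; as a proof of Theorem \ref{theorem1.1} alone, your argument is complete and somewhat more economical.
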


With this result, we can conclude that in every infinite-dimensional vector space, there is at least one norm that does not make it a Banach space. This observation naturally leads to another question: {\it Does every vector space have a norm that makes it a Banach space?} The answer is no. One key property of a Banach space, which follows from Baire's Category Theorem, is that its Hamel basis must be either finite or uncountable \cite[Corollary 5.23]{Aliprantis}. Because of this, for example, the space of polynomials $\mathcal{P}(\mathbb{R})$ cannot be a Banach space under any norm, since its Hamel basis is the countable set $\{1, t, t^2, \dots\}$. More generally, what we do know from the Completeness Theorem \cite{Kreyzig} is that any normed space can be isometrically identified with a dense subspace of some Banach space. This limitation motivates the search for tools to define a norm on a vector space $E$ with an uncountable Hamel basis, ensuring that $E$ becomes a Banach space under that norm. One way to approach this problem is to find a Banach space $F$ that is isomorphic to $E$. More precisely:

\begin{theorem}\label{theorem1.2}  
	Let $E$ be a real or complex vector space. If there exists a Banach space $F$ and a bijective linear operator $T: E \to F$, then the function $\| u \| = \| Tu \|_F$ defines a norm on $E$ that makes it a Banach space.
\end{theorem}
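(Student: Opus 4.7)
The plan is to verify directly from the definition that $\|u\| := \|Tu\|_F$ satisfies the three axioms of a norm, and then to transfer completeness from $F$ to $E$ using that $T$ is a bijection. Throughout, the linearity of $T$ and the norm axioms on $F$ do almost all the work; the two distinct roles played by the bijectivity hypothesis should be highlighted separately, since injectivity and surjectivity are used at different points.

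First I would check the norm axioms. For positive definiteness, nonnegativity is immediate, and $\|u\| = 0$ means $\|Tu\|_F = 0$, hence $Tu = 0$, and then injectivity of $T$ forces $u = 0$. For absolute homogeneity, linearity gives $\|\alpha u\| = \|T(\alpha u)\|_F = \|\alpha Tu\|_F = |\alpha|\,\|Tu\|_F = |\alpha|\,\|u\|$. The triangle inequality follows similarly from $T(u+v) = Tu + Tv$ and the triangle inequality in $F$.

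Next I would verify that $(E, \|\cdot\|)$ is complete. Let $(u_n)$ be a Cauchy sequence in $(E, \|\cdot\|)$. By definition of the norm, $\|Tu_n - Tu_m\|_F = \|T(u_n - u_m)\|_F = \|u_n - u_m\|$, so $(Tu_n)$ is Cauchy in $F$. Since $F$ is a Banach space, there exists $v \in F$ with $Tu_n \to v$. Here the surjectivity of $T$ enters: pick $u \in E$ with $Tu = v$. Then $\|u_n - u\| = \|Tu_n - Tu\|_F = \|Tu_n - v\|_F \to 0$, so $u_n \to u$ in $(E, \|\cdot\|)$.

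There is no serious obstacle in this argument; the proof is essentially a transport of structure along the bijection $T$. The one thing I would be careful to make explicit is \emph{where} each half of the bijectivity hypothesis is used: injectivity for positive definiteness of the norm, and surjectivity to guarantee that the limit of a Cauchy sequence, which a priori lives in $F$, corresponds to an element of $E$.
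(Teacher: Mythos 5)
Your proof is correct and follows essentially the same route as the paper, which establishes the norm axioms via injectivity and linearity of $T$ and then transfers completeness by mapping a Cauchy sequence into $F$ and pulling the limit back via surjectivity (this is items \ref{2.1a} and \ref{2.1c} of Lemma \ref{lemma3.1}). Your explicit separation of where injectivity versus surjectivity is used matches the paper's argument exactly.
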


Determining the existence of a Banach space $F$ that satisfies the hypotheses of Theorem \ref{theorem1.2} is far from trivial. One possible approach to addressing this question is to show that for every cardinal number $\kappa \geq \mathfrak{c}$, where $\mathfrak{c}$ denotes the cardinality of the continuum, there exists a Banach space $F$ with cardinality $\kappa$. In such a case, any Hamel basis of $F$ would necessarily have cardinality $\kappa$, as established in Theorem 3.5 of \cite{Lorenz}. This would imply, in particular, that for any vector space $E$ with a Hamel basis with cardinality $\kappa$, it would be possible to construct a linear isomorphism between $E$ and $F$, thus endowing $E$ with a Banach space structure. However, this approach fails when $ \kappa = \aleph_0 $, the cardinality of $ \mathbb{N} $, as previously observed. This leads to an intriguing question: {\it Does there exist some cardinal $ \kappa>\aleph_0 $ for which no vector space with a Hamel basis of that cardinality can be endowed with a norm that turns it into a Banach space?} To the best of the authors' knowledge, no explicit example of such a cardinal $ \kappa $ is currently known. This question was originally posed by Laugwitz in 1955 \cite{Laugwitz} and was partially answered in \cite{Arent}, where Theorem 4.7 provides a result under the assumption of the Generalized Continuum Hypothesis.  

The theory of norms in vector spaces raises several interesting questions, especially in more abstract contexts such as Functional Analysis and the Theory of Transfinite Numbers. A central question in this study is the relationship between different norms defined on the same vector space. Suppose that a vector space $E$ admits two norms, denoted by $\| \cdot \|_1$ and $\| \cdot \|_2$. A natural question arises: {\it Are these norms equivalent?} In Functional Analysis, we say that the norms $\| \cdot \|_1$ and $\| \cdot \|_2$ are equivalent, and we write $\| \cdot \|_1 \sim \| \cdot \|_2$, if there exist positive constants $c_1$ and $c_2$ such that 
$$
c_1 \| u \|_1 \leq \| u \|_2 \leq c_2 \| u \|_1, \quad \text{for all } u \in E.
$$  
If this relation is not satisfied, we say that the norms are not equivalent, which is represented symbolically as $\| \cdot \|_1 \not\sim \| \cdot \|_2$. The equivalence of norms has a fundamental property: if two norms are equivalent, they induce the same topology on the vector space. This means that although they may measure vectors in different ways, they preserve essential topological properties such as convergence, compactness, and separability.

In the study of Functional Analysis, a fundamental property of finite-dimensional spaces is that all norms are equivalent \cite[Theorem 2.4-5]{Kreyzig}. However, when we move to the study of infinite-dimensional spaces, the situation changes dramatically, and norm equivalence is no longer universal. A classic example illustrating this difference is the space of continuous functions $C([0,1], \mathbb{R})$. In this space, two widely used norms are the uniform norm $\|\cdot\|_{\infty}$ and the integral norm $\|\cdot\|_1$, defined as:  
$$
\| f \|_{\infty} = \max_{t \in [0,1]} |f(t)|\quad\text{and}\quad  
\| f \|_{1} = \int_{0}^{1} |f(s)| \, \mathrm{d}s.
$$  
These norms capture different aspects of $f$, and they are not equivalent. More precisely, while every continuous function $f$ in $C([0,1], \mathbb{R})$ satisfies the inequality $\| f \|_{1} \leq \| f \|_{\infty}$, the converse does not hold: there is no constant $c > 0$ such that $\| f \|_{\infty} \leq c \| f \|_{1}$ for all continuous functions $f$. This distinction leads to different notions of convergence and compactness. For instance, a set that is compact with respect to one norm may not be compact with respect to the other, which has direct implications for the study of continuous linear operators and the analysis of sequences or series in $E$. This difference affects the topological structure of sets, the nature of continuous functions and compact operators, and the interpretation of key concepts such as completeness, separability, and duality.

A remarkable result in the theory of infinite-dimensional spaces concerns the relationship between equivalent norms and the structure of Banach spaces, based on the Open Mapping Theorem (See \cite[Corollary 2.8]{Brezis}). Specifically, consider a normed vector space $E$ equipped with two distinct norms, $\| \cdot \|_1$ and $\| \cdot \|_2$, such that both $(E, \| \cdot \|_1)$ and $(E, \| \cdot \|_2)$ are Banach spaces. The result states that if there exists a positive constant $c > 0$ such that  
$$
\| u \|_1 \leq c \| u \|_2  \quad \text{for all} \;\; u \in E,
$$  
then the norms $\| \cdot \|_1$ and $\|\cdot \|_2$ are equivalent. However, this theorem does not cover all cases, and its applicability crucially depends on the completeness of the underlying normed spaces. A classic counterexample is the space $C([0,1], \mathbb{R})$, equipped with the norm $\| \cdot \|_{\infty}$ and the integral norm $\| \cdot \|_1$. While $(C([0,1], \mathbb{R}), \| \cdot \|_{\infty})$ forms a Banach space, the same is not true for $(C([0,1], \mathbb{R}), \|\cdot\|_1)$.  

Motivated by the previous discussions, an intriguing question arises: {\it Is it possible to find a vector space equipped with two norms, where both define Banach structures, but the norms are not equivalent?} The answer to this question reveals one of the most fascinating aspects of functional analysis in infinite-dimensional spaces, and in fact, it shows that every infinite-dimensional Banach space satisfies this property. More specifically, the following theorem can be proven:

\begin{theorem}\label{theorem1.3}  
	For every infinite-dimensional real or complex Banach space $(E, \| \cdot \|_1)$, there exists another norm $\| \cdot \|_2$ such that $(E, \| \cdot \|_2)$ is also a Banach space, but the norms $\| \cdot \|_1$ and $\| \cdot \|_2$ are not equivalent.
\end{theorem}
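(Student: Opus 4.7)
The plan is to construct $\|\cdot\|_2$ by pulling back $\|\cdot\|_1$ through an unbounded linear bijection $T: E \to E$, so that Theorem \ref{theorem1.2} supplies the Banach structure while the unboundedness of $T$ rules out equivalence.

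First, since $E$ is infinite-dimensional I pick a linearly independent sequence $(f_n)_{n\in\N} \subset E$ and normalize by setting $e_n := f_n/\|f_n\|_1$, so that $\|e_n\|_1 = 1$ for every $n$. Invoking Zorn's Lemma, as recalled in the introduction, I extend the linearly independent set $\{e_n : n \in \N\}$ to a Hamel basis $\mathcal{B}$ of $E$.

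Next, I define a linear operator $T: E \to E$ by prescribing its values on $\mathcal{B}$,
$$
T(e_n) = n e_n \quad (n \in \N), \qquad T(b) = b \quad \text{for all } b \in \mathcal{B}\setminus\{e_n : n \in \N\},
$$
and extending by linearity via the unique finite expansion of each vector in terms of $\mathcal{B}$. The key structural step is verifying that $T$ is a bijection. Since $T$ scales each basis element by a nonzero factor, the map prescribed on $\mathcal{B}$ by $S(e_n) = (1/n)e_n$ and $S(b)=b$ otherwise extends by linearity to a two-sided inverse of $T$; using the uniqueness of the finite basis expansion, both injectivity and surjectivity follow directly.

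Finally, applying Theorem \ref{theorem1.2} to $T: E \to (E, \|\cdot\|_1)$, the function $\|u\|_2 := \|Tu\|_1$ is a norm for which $(E, \|\cdot\|_2)$ is a Banach space. For non-equivalence, observe that $\|e_n\|_1 = 1$ while $\|e_n\|_2 = \|T e_n\|_1 = n$, so no constant $c > 0$ can satisfy $\|e_n\|_2 \leq c \|e_n\|_1$ uniformly in $n$. I expect the main subtlety to lie in the careful bijectivity argument for $T$, since everything must be phrased purely in terms of the Hamel expansion; once that is settled, the appeal to Theorem \ref{theorem1.2} and the non-equivalence witness through the sequence $(e_n)$ are immediate.
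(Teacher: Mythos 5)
Your proposal is correct and follows essentially the same strategy as the paper: construct an unbounded linear bijection $T:E\to E$ via a Hamel basis and pull back the complete norm by setting $\|u\|_2=\|Tu\|_1$, then witness non-equivalence on a sequence where $\|Tu\|_1/\|u\|_1\to\infty$. The only difference is the choice of $T$ — you use a diagonal scaling $e_n\mapsto ne_n$ with explicit inverse $e_n\mapsto e_n/n$, whereas the paper uses the rank-one involution $Tu=u-\varphi(u)e$ (so that $T^{-1}=T$ for free) — but both yield the required bijectivity and discontinuity, so your argument is a valid variant of the same proof.
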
 

This result is well known in the literature and is mentioned, for example, in \cite{Arent}, where the idea of the proof is outlined. However, a detailed proof is not presented. Given the fundamental nature of this theorem  and the richness of the argument, we believe that providing a complete proof adapted to our context, together with a discussion of its consequences, adds value to our work.

Theorem \ref{theorem1.3} reveals the richness and complexity of Banach spaces in infinite dimension. Since non-equivalent norms can coexist in the same vector space, a natural question arises: {\it To what extent do these norms preserve (or fail to preserve) the same topological properties?} In general, in infinite-dimensional spaces, topological properties are often not preserved under non-equivalent norms. For example, consider $(E, \|\cdot\|)$ a separable Banach space. By Corollary 6.8 in \cite{Carothers}, there exists an isometric isomorphism $T: (E, \|\cdot\|) \to F$, where $F$ is a subspace of $(\ell^{\infty}, \|\cdot \|_{\infty})$, where $\| \cdot\|_{\infty}$ denotes the usual norm in $\ell^{\infty}$. However, for any other norm $\|\cdot\|_*$ on $E$ such that $(E, \|\cdot\|_*)$ is a Banach space and $\|\cdot\|_* \not\sim \|\cdot\|$, the linear operator $\tilde{T}: (E, \|\cdot\|_*) \to F$ defined by $\tilde{T}(u)=T(u)$ is not continuous. This happens because there is no constant $c > 0$ that satisfies the inequality 
$$
\|\tilde{T}(u)\|_{\infty} = \|u\| \leq c\|u\|_*,\quad \text{for all }u\in E.
$$
From this perspective, we will prove the following:

\begin{theorem}\label{theorem1.4} 
	For every real or complex separable Banach space $(E, \|\cdot\|_1)$ of infinite dimension, there exists another norm $\|\cdot\|_2$ such that $(E, \|\cdot\|_2)$ is a non-separable Banach space, and $\|\cdot\|_1 \not\sim \|\cdot\|_2$.
\end{theorem}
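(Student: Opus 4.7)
The plan is to transport the non-separable Banach space structure of $\ell^{\infty}$ onto $E$ via a linear bijection, using Theorem \ref{theorem1.2} as the engine.

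First, I would verify the cardinality bookkeeping. Since $(E,\|\cdot\|_1)$ is a separable infinite-dimensional Banach space, it has cardinality $\mathfrak{c}$ (any separable metric space has cardinality at most $\mathfrak{c}$, and any nontrivial real or complex vector space contains a line, hence has cardinality at least $\mathfrak{c}$). Because $E$ is infinite-dimensional and complete, Theorem 3.5 of \cite{Lorenz} gives that any Hamel basis of $E$ has cardinality $|E|=\mathfrak{c}$. The same theorem applied to $\ell^{\infty}$ together with $|\ell^{\infty}|=\mathfrak{c}^{\aleph_0}=\mathfrak{c}$ shows that $\ell^{\infty}$ also admits a Hamel basis of cardinality $\mathfrak{c}$.

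Next, pick Hamel bases $\mathcal{B}_E$ of $E$ and $\mathcal{B}_{\ell^{\infty}}$ of $\ell^{\infty}$ and any bijection $\varphi\colon \mathcal{B}_E\to\mathcal{B}_{\ell^{\infty}}$, which by Hamel linearity extends uniquely to a bijective linear map $T\colon E\to \ell^{\infty}$. Applying Theorem \ref{theorem1.2} with $F=(\ell^{\infty},\|\cdot\|_\infty)$, the formula
\[
\|u\|_2 := \|Tu\|_\infty, \qquad u\in E,
\]
defines a norm making $(E,\|\cdot\|_2)$ a Banach space. Moreover, by construction $T\colon(E,\|\cdot\|_2)\to(\ell^{\infty},\|\cdot\|_\infty)$ is an isometric isomorphism.

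Finally, both required properties follow easily. Since separability is preserved by isometric isomorphisms and $\ell^{\infty}$ is not separable, $(E,\|\cdot\|_2)$ cannot be separable. For the non-equivalence, if we had $\|\cdot\|_1\sim\|\cdot\|_2$ then the two norms would induce the same topology on $E$, so the separability of $(E,\|\cdot\|_1)$ would force $(E,\|\cdot\|_2)$ to be separable as well, contradicting what we just established; hence $\|\cdot\|_1\not\sim\|\cdot\|_2$.

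I do not expect a serious obstacle: the only delicate point is justifying that the Hamel dimensions of $E$ and $\ell^{\infty}$ coincide, which is handled cleanly by Theorem 3.5 of \cite{Lorenz} once the cardinality computation $|E|=|\ell^{\infty}|=\mathfrak{c}$ is recorded. After that, the argument reduces to invoking Theorem \ref{theorem1.2} and a one-line topological observation.
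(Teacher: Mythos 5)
Your proposal is correct and follows essentially the same route as the paper: match Hamel bases of $E$ and $\ell^\infty$ (both of cardinality $\mathfrak{c}$ by Theorem 3.5 of \cite{Lorenz}), transport the $\ell^\infty$-norm back to $E$ via the resulting linear bijection, and conclude non-separability and non-equivalence from the isometry. The only cosmetic difference is that the paper routes the non-equivalence step through the Open Mapping Theorem to rule out even a one-sided bound $\|u\|_1 \leq c\|u\|_2$, whereas you argue directly that equivalent norms would transfer separability; both are valid for the statement as given.
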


This result highlights that the separability of a vector space under one norm may be lost under another, even if both norms define the space of Banach. This fact also highlights the importance of carefully selecting the most appropriate norm for each context. Such a choice is particularly relevant in applications such as the study of Calculus of Variations and Differential Equations, where topological properties play a crucial role in problem analysis and solution. 

Note that separability is a topological property preserved under equivalent norms. In other words, if a normed space $ E $ satisfies the property $ P $ with respect to the norm $ \|\cdot\|_1 $, then $ E $ will also satisfy $ P $ with respect to any equivalent norm $ \|\cdot\|_2 $. Other examples of properties that are invariant under equivalent norms include completeness, reflexivity, the approximation property, the existence of a Schauder basis, and the Schur property. Based on this principle, we can state a more general result than Theorem \ref{theorem1.4}, considering topological properties that remain unchanged under equivalent norms:  

\begin{theorem}\label{NewTheorem} 
	Let $(E, \|\cdot\|_1)$ be a real or complex Banach space satisfying a property $ P $ that is preserved under equivalent norms and under isometries. If there exists a banach space $ F $ with the same cardinality as $ E $ that does not satisfy property $ P $, then there exists a norm $\|\cdot\|_2$ on $ E $ that is not equivalent to $\|\cdot\|_1$ and such that $(E, \|\cdot\|_2)$ does not satisfy property $ P $.  
\end{theorem}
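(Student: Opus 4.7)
The plan is to import the Banach-space structure of $F$ into $E$ through a linear bijection, exactly as Theorem \ref{theorem1.2} prescribes, and then read off both the failure of $P$ and the non-equivalence of norms from the two preservation hypotheses on $P$. The statement is substantive only when $E$ is infinite-dimensional, since in finite dimension all norms on $E$ are equivalent; so I would reduce to that case at the outset. Assuming $E$ infinite-dimensional, the existence of $F$ with $|F|=|E|$ not satisfying $P$ forces $F$ to be infinite-dimensional too (every infinite-dimensional real or complex Banach space has cardinality at least $\mathfrak{c}$, and any finite-dimensional Banach space would inherit $P$ from $(E,\|\cdot\|_1)$ via a standard equivalence-of-norms argument whenever $P$ is preserved under equivalent norms and isometries).

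Next, because both $E$ and $F$ are infinite-dimensional Banach spaces of the same cardinality, Theorem 3.5 of \cite{Lorenz} ensures that every Hamel basis of $E$ and every Hamel basis of $F$ have the same cardinality. Fixing Hamel bases $\mathcal{B}_E \subset E$ and $\mathcal{B}_F \subset F$ together with a bijection $\varphi: \mathcal{B}_E \to \mathcal{B}_F$, I would extend $\varphi$ by linearity to a bijective linear operator $T: E \to F$. Applying Theorem \ref{theorem1.2} to $T$, the formula
\[
    \|u\|_2 := \|Tu\|_F, \quad u \in E,
\]
defines a norm making $(E,\|\cdot\|_2)$ a Banach space, and by construction $T$ is a surjective linear isometry from $(E,\|\cdot\|_2)$ onto $F$. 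Since $F$ does not satisfy $P$ and $P$ is preserved under isometries, $(E,\|\cdot\|_2)$ cannot satisfy $P$ either.

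Finally, to show $\|\cdot\|_1 \not\sim \|\cdot\|_2$, I would argue by contradiction: if the two norms were equivalent, then, since $(E,\|\cdot\|_1)$ enjoys $P$ and $P$ is stable under equivalent norms, $(E,\|\cdot\|_2)$ would inherit $P$, contradicting the previous step. Hence the two norms are not equivalent, and the theorem follows.

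I expect the main, and essentially only, obstacle to be the very first step: justifying that $|E|=|F|$ forces Hamel bases of the same cardinality. This is not an internal computation but a direct invocation of Theorem 3.5 of \cite{Lorenz}, together with the mild observation that both spaces must be infinite-dimensional for the statement to be non-vacuous. Everything after that is a formal combination of Theorem \ref{theorem1.2} with the two preservation assumptions on $P$.
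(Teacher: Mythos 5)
Your proposal is correct and follows essentially the same route as the paper: a bijection between Hamel bases of equal cardinality (obtained from Theorem 3.5 of \cite{Lorenz}) is extended linearly to a bijection $T:E\to F$, the norm $\|u\|_2=\|Tu\|_F$ is pulled back via Theorem \ref{theorem1.2}, and the two preservation hypotheses on $P$ give, respectively, the failure of $P$ for $(E,\|\cdot\|_2)$ and the non-equivalence of the norms. The one flaw is your parenthetical claim that a finite-dimensional $F$ with $|F|=|E|$ ``would inherit $P$ from $(E,\|\cdot\|_1)$'' --- that is not justified (such an $F$ need not be linearly isomorphic to anything carrying $P$; e.g.\ $\mathbb{R}$ and $\mathbb{R}^2$ have the same cardinality), so your reduction to the infinite-dimensional case is not actually established; however, this concerns an edge case where the theorem's cardinality hypothesis is genuinely too weak, and the paper's own proof silently assumes both spaces are infinite-dimensional in order to invoke \cite[Theorem 3.5]{Lorenz}.
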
 

Here, the property $P$ is preserved under isometries, meaning that if a space has property $P$, then any isometric space also has the property $P$. We would like to highlight that not every topological property is preserved under equivalent norms, an example of this is uniform convexity. However, this issue has been explored in different contexts, and a relevant study \cite{Kikianty} introduces a weaker notion than that of equivalent norms, called angularly equivalent norms. In this new setting, the property of uniformly convex spaces remains invariant. For more details, see \cite{Kikianty2,Kikianty}. 

The primary goal of this work is to address the questions raised in this introduction, as well as to explore other intriguing problems that naturally arise in this context, using fundamental concepts of Functional Analysis. Although some of the topics discussed here are classical, the integrated and cohesive approach presented is, to the best of our knowledge, novel. We would like to emphasize that this work provides a valuable resource for students, researchers, and educators interested in Functional Analysis. 

This work is structured as follows: In Section \ref{Sec2}, we discuss fundamental questions about norms in vector spaces induced by a bijective linear map, exploring their properties and implications. As part of this discussion, we establish Theorem \ref{theorem1.2}. Section \ref{Sec3} addresses the existence of non-equivalent norms in infinite-dimensional Banach spaces, examining how these norms affect key topological properties. In this section, we prove Theorems \ref{theorem1.3} and \ref{theorem1.4}. Finally, in Section \ref{Sec4}, we investigate whether norms generated by a Hamel basis in a vector space can define a Banach space, and proving Theorem \ref{theorem1.1}. 

Finally, throughout this text the symbol $\mathbb{K}$ will denote, indistinctly, the field of real numbers $\mathbb{R}$ or the field $\mathbb{C}$ of complex numbers. From this point on, unless otherwise stated, all spaces considered, including $ E $ and any other relevant spaces, will be assumed to be vector spaces over $ \mathbb{K} $.


\section{Norm induced by a bijective linear map}\label{Sec2}

The main goal of this section is to explore some questions about norms in vector spaces that are induced by bijective linear maps. In particular, we aim to prove Theorem \ref{theorem1.2}. Given a vector space $E$ over $\mathbb{K}$, a natural approach to defining a norm on $E$ that preserves certain topological structures is through isometries. More precisely, consider the following result:

\begin{lemma}\label{lemma3.1}  
	Let $E$ be a vector space and $(F, \| \cdot \|_F)$ a normed space. Suppose there exists a bijective linear operator $T: E \to F$, and consider the function $\| \cdot \|: E \to \mathbb{R}$ defined by  
	\begin{equation*}\label{T-norm} 
		\| u \|= \| Tu \|_F  \quad \text{for all } u \in E.
	\end{equation*}
	Then:  
	\begin{enumerate}[label=(\alph*)]
		\item \label{2.1a} $\| \cdot \|$ is a norm on $E$; 
		\item \label{2.1b} If $\| \cdot \|_F$ is induced by an inner product, then $\| \cdot \|$ is also induced by an inner product; 
		\item \label{2.1c} If $F$ is a Banach space, then $(E, \| \cdot \|)$ is also a Banach space;  
		\item \label{2.1d} If $F$ is not separable, then $(E, \| \cdot \|)$ is not separable;  
		\item \label{2.1e} If $F$ is separable, then $(E, \|\cdot\|)$ is separable.  
	\end{enumerate}
	Now, if we endow $E$ with another norm $\| \cdot \|_1$ and consider $T: (E, \| \cdot \|_1) \to (F, \| \cdot \|_F)$ as a linear operator between normed spaces, we have:  
	\begin{enumerate}[label=(\alph*), start=6]  	 
		\item \label{2.1f} $\|\cdot \|_1 \sim \|\cdot \|$ if and only if $T$ and $T^{-1}$ are continuous;  
		\item \label{2.1g} $\|\cdot \|_1 \not\sim \|\cdot \|$ if and only if $T$ or $T^{-1}$ is non-continuous.  
	\end{enumerate}
\end{lemma}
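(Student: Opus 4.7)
The whole lemma amounts to transferring properties of $F$ to $E$ through the bijective isometry $T$ between $(E,\|\cdot\|)$ and $(F,\|\cdot\|_F)$ built into the very definition $\|u\|=\|Tu\|_F$. For part \ref{2.1a} the plan is to verify the four axioms directly: nonnegativity and homogeneity come straight from the corresponding properties of $\|\cdot\|_F$ together with linearity of $T$; the triangle inequality uses $T(u+v)=Tu+Tv$; and positive definiteness is the only place where bijectivity enters, since $\|u\|=0$ gives $Tu=0$ and then injectivity of $T$ forces $u=0$. For part \ref{2.1b}, I would define $\langle u,v\rangle:=\langle Tu,Tv\rangle_F$ and check that the inner-product axioms lift verbatim through $T$; the identity $\|u\|^2=\langle Tu,Tu\rangle_F=\|Tu\|_F^2$ shows the resulting inner product induces $\|\cdot\|$.

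For \ref{2.1c} the argument is the standard completion-through-an-isometry trick: take a Cauchy sequence $(u_n)$ in $(E,\|\cdot\|)$; since $\|Tu_n-Tu_m\|_F=\|u_n-u_m\|$, the image sequence is Cauchy in $F$ and converges to some $y\in F$; by surjectivity of $T$ there exists $u\in E$ with $Tu=y$, and then $\|u_n-u\|=\|Tu_n-y\|_F\to 0$. Parts \ref{2.1d} and \ref{2.1e} then follow from the symmetric observation that for any $D\subseteq E$, the set $T(D)\subseteq F$ is countable iff $D$ is (bijectivity), and is dense in $F$ iff $D$ is dense in $E$ (because $T$ preserves distances between the two norms by construction). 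For \ref{2.1e} I would take a countable dense $D'\subset F$ and use $T^{-1}(D')$; for \ref{2.1d} I would argue the contrapositive in the same way.

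Parts \ref{2.1f} and \ref{2.1g} are essentially bookkeeping about linear operators. Continuity of $T\colon (E,\|\cdot\|_1)\to (F,\|\cdot\|_F)$ is equivalent to the existence of $C>0$ with $\|Tu\|_F\le C\|u\|_1$, i.e.\ $\|u\|\le C\|u\|_1$ for all $u\in E$. Continuity of $T^{-1}\colon (F,\|\cdot\|_F)\to (E,\|\cdot\|_1)$ is equivalent to the existence of $c>0$ with $\|T^{-1}y\|_1\le c\|y\|_F$ for all $y\in F$; substituting $y=Tu$ this reads $\|u\|_1\le c\|u\|$. The conjunction of these two inequalities is exactly $\|\cdot\|_1\sim\|\cdot\|$, proving \ref{2.1f}. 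Then \ref{2.1g} is the pure logical negation of \ref{2.1f}.

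No step looks genuinely hard: the content of the lemma is that bijective linear maps let us pull back every metric/topological property carried by the norm, and the only delicate bookkeeping is remembering where injectivity versus surjectivity is used (injectivity in \ref{2.1a} for positive definiteness, surjectivity in \ref{2.1c} to produce the limit point inside $E$, and both together in \ref{2.1d}--\ref{2.1e} to transfer countable dense subsets across $T$). I would therefore present the proof in the same order as the enumeration, grouping \ref{2.1d} and \ref{2.1e} together and deriving \ref{2.1g} as an immediate corollary of \ref{2.1f}.
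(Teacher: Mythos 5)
Your proposal is correct and follows essentially the same route as the paper's proof: positive definiteness via injectivity, the pulled-back inner product $\langle Tu,Tv\rangle_F$ for \ref{2.1b}, the Cauchy-sequence transfer using surjectivity for \ref{2.1c}, transfer of countable dense sets across $T$ for \ref{2.1d}--\ref{2.1e}, and the two-sided norm inequality characterization of continuity of $T$ and $T^{-1}$ for \ref{2.1f}, with \ref{2.1g} as its negation. No gaps.
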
  
\begin{proof}
To prove item \ref{2.1a}, note that, due to the injectivity of $ T $, $ \text{ker}(T) = \{0\} $. Thus,  
$$
\| u \| = 0 \iff \|Tu\|_F = 0 \iff Tu = 0 \iff u = 0.
$$  
Moreover, due to the linearity of $ T $ and the fact that $\|\cdot\|_F$ is a norm, we obtain the properties  
$$
\|\lambda u\| = |\lambda| \|u\|  
\quad \text{and} \quad  
\|u + v\| \leq \|u\| + \|v\|, \quad \forall\,\, u,v \in E, \; \lambda \in \mathbb{K}.
$$  
Thereby, $\|\cdot\|$ defines a norm on $ E $. To prove item \ref{2.1b}, suppose that $\|\cdot\|_F$ is induced by the inner product $(\cdot,\cdot)_F$. In this case, the function  
$$
(u, v) = (Tu, Tv)_F, \quad \forall\,\, u,v \in E,
$$  
defines an inner product on $ E $, which induces the norm $\|\cdot\|$. For item \ref{2.1c}, consider $(u_n)$ a Cauchy sequence in $(E, \|\cdot\|)$. Since  
$$
\|Tu_n - Tu_m\|_F = \|u_n - u_m\| \to 0 \quad \text{as} \quad m, n \to +\infty,
$$  
it follows that $(Tu_n)$ is a Cauchy sequence in $ F $. By the completeness of $ F $, there is $ v \in F $ such that $ Tu_n \to v $ in $ F $. Moreover, by the surjectivity of $ T $, there exists $ u \in E $ such that $ Tu = v $. Consequently,  
$$
\|u_n - u\| = \|Tu_n - v\|_F \to 0 \quad \text{as} \quad n \to +\infty.
$$  
Therefore, $(E, \|\cdot\|)$ is a Banach space. For item \ref{2.1d}, suppose, for contradiction, that $(E, \|\cdot\|)$ is separable. This implies the existence of a countable subset $ A \subset E $ that is dense in $ E $ with respect to the norm $\|\cdot\|$. We will prove that the countable set $ T(A) \subset F $ is also dense in $ F $. Let $ v \in F $ and $\varepsilon > 0$. By the surjectivity of $ T $, there exists $ u \in E $ such that $ v = Tu $. By the density of $ A $, there exists $ w \in A $ such that $\|u - w\| < \varepsilon$. Thus,  
$$
\|v - Tw\|_F = \|Tu - Tw\|_F = \|u - w\| < \varepsilon.
$$  
This shows that $ T(A) $ is dense in $ F $, contradicting the assumption that $ F $ is not separable. For item \ref{2.1e}, suppose that $ F $ is separable. Then, by the surjectivity of $ T $, there exists a countable set $ A\subset E $ such that $ T(A) $ is dense in $ F $. Therefore, for each $ u\in E $ and $\varepsilon>0$, by density, there exists $ w\in A $ such that $ \|Tu - Tw\|_F<\varepsilon $, or equivalently, $ \|u-w\|<\varepsilon $, showing that $ A $ is dense in $ E $. Hence, $ E $ is separable. Item \ref{2.1f} follows immediately because $ T $ and $ T^{-1} $ are continuous if and only if there exist constants $ c_1,c_2>0 $ such that  
$$
c_1 \| u \|_1 \leq \| Tu \|_F \leq c_2 \| u \|_1 \quad \forall u\in E,  
$$  
or equivalently, $ \|\cdot\|_1\sim \|\cdot\| $. Finally, by the contrapositive of item \ref{2.1f}, we obtain item \ref{2.1g}.	
\end{proof}

Note that, by construction, the space $(E, \|\cdot\|)$ considered in Lemma \ref{lemma3.1} is isometric to the space $(F, \|\cdot\|_F)$. Consequently, it inherits all topological properties of $F$ that are invariant under isometries, such as reflexivity.

\begin{proof}[{\bf Proof of Theorem \ref{theorem1.2}.}]
The result follows directly from item \ref{2.1c} of Lemma \ref{lemma3.1}.	
\end{proof}

Let us now look at some applications of Lemma \ref{lemma3.1}.

\begin{example}
Let us show that the following real vector space of functions
$$
E = \big\{ f \in C^1([0,1], \mathbb{R}) : f(0) = 0 \big\},
$$
equipped with the maximum norm of the derivative $ \|f\| = \|f'\|_{\infty} $, is a Banach space. To do this, consider the bijective linear operator
$$
T : (E, \| \cdot \|_{\infty}) \longrightarrow (C([0,1], \mathbb{R}), \|\cdot\|_\infty ), \quad f \longmapsto f',
$$
with the inverse given by
$$
T^{-1}f(t) = \int_0^t f(s) \, \mathrm{d}s.
$$
Now consider the sequence of functions $ f_n(t) =\dfrac{t^n}{n}$ in $ E $. For this sequence, we have
$$
\frac{ \| Tf_n \|_\infty } { \| f_n \|_\infty } = n \to+ \infty,
$$
which implies that the operator $ T $ is non-continuous. Based on Lemma \ref{lemma3.1}, using items \ref{2.1a}, \ref{2.1c}, and \ref{2.1g}, we can conclude the following:
\begin{enumerate}
	\item[1.] $\| f \| = \| Tf \|_\infty = \| f' \|_\infty$ is a norm on $ E $;
	\item[2.] The space $ (E, \|\cdot\|) $ is Banach;
	\item[3.] The norms $ \|\cdot\|_\infty $ and $ \|\cdot\| $ are not equivalent on $ E $.
\end{enumerate}	
\end{example} 

We know that when two norms $ \|\cdot\|_1 $ and $ \|\cdot\|_2 $ on a vector space $ E $ are equivalent, then every Cauchy sequence with respect to the norm $ \|\cdot\|_1 $ is also a Cauchy sequence with respect to the norm $ \|\cdot\|_2 $, and vice versa. A natural question that arises is whether the converse of this statement is also true, that is, does the fact that the norms $ \|\cdot\|_1 $ and $ \|\cdot\|_2 $ preserve Cauchy sequences imply that they are equivalent? The answer to this question is affirmative, as will be proven in the following result.

\begin{corollary} 
	Norms on a normed vector space $ E $ are equivalent if and only if Cauchy sequences in $ E $ are preserved between them.
\end{corollary}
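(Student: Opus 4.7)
The forward direction is immediate: if $\|\cdot\|_1 \sim \|\cdot\|_2$ with constants $c_1, c_2 > 0$ satisfying $c_1 \|u\|_1 \leq \|u\|_2 \leq c_2 \|u\|_1$, then any sequence which is Cauchy in one norm is Cauchy in the other because the inequalities propagate the smallness of $\|u_n - u_m\|$ between the two norms. I would dispose of this direction in one short paragraph.

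For the nontrivial direction, my plan is a clean proof by contradiction that exploits the fact that every Cauchy sequence is bounded in its own norm. Assume Cauchy sequences are preserved in both directions, and suppose, for contradiction, that no constant $c > 0$ satisfies $\|u\|_2 \leq c\|u\|_1$ for all $u \in E$. Then for each $n \in \mathbb{N}$, I can pick $u_n \in E\setminus\{0\}$ such that $\|u_n\|_2 > n^2 \|u_n\|_1$. Rescaling, I set
$$
v_n = \frac{u_n}{n\,\|u_n\|_1}, \qquad \text{so that} \qquad \|v_n\|_1 = \frac{1}{n} \quad \text{and} \quad \|v_n\|_2 > n.
$$
Since $\|v_n\|_1 \to 0$, the sequence $(v_n)$ converges to $0$ in $\|\cdot\|_1$, and is therefore Cauchy in $\|\cdot\|_1$. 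By the preservation hypothesis, $(v_n)$ is also Cauchy in $\|\cdot\|_2$, hence bounded with respect to $\|\cdot\|_2$. This directly contradicts $\|v_n\|_2 > n \to \infty$, so the constant $c$ must exist.

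Applying the same argument with the roles of $\|\cdot\|_1$ and $\|\cdot\|_2$ interchanged (using the other direction of the preservation hypothesis) yields a constant $c' > 0$ with $\|u\|_1 \leq c'\|u\|_2$ for all $u \in E$, which together with the previous inequality gives $\|\cdot\|_1 \sim \|\cdot\|_2$.

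I do not expect a genuine obstacle here: the only subtlety is to remember that the hypothesis must be applied in both directions, since a priori non-equivalence could fail through either inequality, and to make sure the constructed sequence $(v_n)$ is already convergent (to zero) in one norm so that its Cauchy character is automatic, bypassing the need to invoke completeness or any structural result such as Lemma \ref{lemma3.1}. Alternatively, one could phrase the argument directly through items \ref{2.1f} and \ref{2.1g} of Lemma \ref{lemma3.1} applied to the identity map $I:(E,\|\cdot\|_1)\to(E,\|\cdot\|_2)$, but the contradiction-from-boundedness route seems shorter and self-contained.
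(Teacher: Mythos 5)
Your proof is correct, but it takes a genuinely different route from the paper's. The paper also argues by contradiction from a sequence $(u_n)$ with $\|u_n\|_2 > 2^n\|u_n\|_1$, but it then builds the telescoping partial sums $v_n=\sum_{k=1}^{n}2^{-k}u_k/\|u_k\|_1$, shows these form a Cauchy sequence in $\|\cdot\|_1$ via the geometric tail estimate, and derives the contradiction from the fact that the consecutive differences satisfy $\|v_n-v_{n-1}\|_2>1$, so $(v_n)$ cannot be Cauchy in $\|\cdot\|_2$; the whole argument is framed as proving continuity of the identity map and then invoking item \ref{2.1f} of Lemma \ref{lemma3.1}. Your version replaces the series construction with a simple rescaling, $v_n=u_n/(n\|u_n\|_1)$, so that $(v_n)$ is a null (hence Cauchy) sequence for $\|\cdot\|_1$ while $\|v_n\|_2>n$, and the contradiction comes from the elementary fact that every Cauchy sequence is bounded in its own norm. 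Both arguments are equally elementary and neither uses completeness; yours is shorter and self-contained, while the paper's fits the corollary into its general framework of norms induced by bijective linear maps. The only points worth making explicit in a final write-up are the ones you already flag: $u_n\neq 0$ (automatic, since the strict inequality fails at $0$) and the symmetric application of the hypothesis to obtain the reverse inequality.
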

\begin{proof}
It suffices to show that if Cauchy sequences in $ E $ are preserved between the norms $ \|\cdot\|_1 $ and $ \|\cdot\|_2 $, then these norms are equivalent. In other words, according to Lemma \ref{lemma3.1}-\ref{2.1f}, we need to prove that the identity operator $ I: (E, \|\cdot\|_1) \to (E, \|\cdot\|_2) $ is continuous and has a continuous inverse, since $ \|\cdot\|_2 = \|I(\cdot)\|_2 $. Assume, for contradiction, that the identity operator $ I $ is not continuous. This means there exists a sequence $ (u_n) $ in $ E $ such that  
\begin{equation}\label{4} 
	\|u_n\|_2 > 2^n \|u_n\|_1,\quad \forall\,\, n\in\mathbb{N}.	
\end{equation}
Now, define the following sequence
$$
v_n = \sum_{k=1}^n \frac{1}{2^k} \frac{u_k}{\|u_k\|_1}.
$$
For $ n > m $ and applying the triangle inequality, we obtain
$$
\|v_n - v_m\|_1 = \left\|\sum_{k=m+1}^n \frac{1}{2^k} \frac{u_k}{\|u_k\|_1}\right\|_1 \leq  \sum_{k=m+1}^n \frac{1}{2^k}.
$$
From this, it follows that $ (v_n) $ is a Cauchy sequence with respect to the norm $ \|\cdot\|_1 $. On the other hand, from \eqref{4}, we deduce 
$$
\|v_n - v_{n-1}\|_2 = \left\|\frac{1}{2^n} \frac{u_n}{\|u_n\|_1}\right\|_2 = \frac{\|u_n\|_2}{2^n\|u_n\|_1} > 1, \quad \forall\,\, n \in \mathbb{N}.
$$
This implies that $ (v_n) $ is not a Cauchy sequence with respect to the norm $ \|\cdot\|_2 $, which is a contradiction, as it contradicts the assumption that Cauchy sequences in $ \|\cdot\|_1 $ and $ \|\cdot\|_2 $ are mutually preserved. Therefore, the identity operator $ I $ must be continuous. The same reasoning shows that $ I:(E, \|\cdot\|_1) \to (E, \|\cdot\|_2) $ also has a continuous inverse. This completes the proof.
\end{proof}


\section{Banach spaces with two non-equivalent norms}\label{Sec3}

In this section, we will discuss the proofs of Theorems \ref{theorem1.3}, \ref{theorem1.4} and \ref{NewTheorem}. We saw in the previous section that Lemma \ref{lemma3.1} presents a way to define norms on a vector space $ E $. This approach is based on the possibility of finding a normed space $ (F, \|\cdot\|_F) $ and a bijective linear operator $ T: E \to F $. In this scenario, the norm generated by $ T $ preserves the topological properties of the normed space $ (F, \|\cdot\|_F) $, ensuring, for example, completeness, separability and reflexivity. However, if $ E $ is already equipped with a norm $ \|\cdot\|_1 $ and the operator $ T: (E, \|\cdot\|_1) \to (F, \|\cdot\|_F) $ is not continuous, then the norm induced by $ T $ will not be equivalent to the original norm $ \|\cdot\|_1 $. This observation highlights that the relationship between norms and operators is directly connected to continuity. Based on this idea, we present the proof of Theorem \ref{theorem1.3}.

\begin{proof}[{\bf Proof of Theorem \ref{theorem1.3}.}]
Since the dimension of $ E $ is infinite, there exists a Hamel basis $ \mathcal{B} $ consisting of infinitely many elements. Consider, then, a countably infinite subset $ \mathcal{B}' = \{u_n\}_{n \in \mathbb{N}} \subset \mathcal{B} $. By Proposition 2.13 of \cite{Knapp}, there exists a unique linear functional $ \varphi: E \to \mathbb{K} $ such that  
$$
\varphi(u_n) = n\|u_n\|_1 \quad \text{and} \quad \varphi(u) = 0 \quad \text{for all } u \in \mathcal{B} \setminus \mathcal{B}'.
$$
Now, we note that for $ e = u_2/\|u_2\|_1 $, one has $ \varphi(e) = 2 $. Based on this, we define the operator $ T: (E, \|\cdot\|_1) \to (E, \|\cdot\|_1) $ by  
$$
Tu = u - \varphi(u)e.
$$
We claim that $ T $ is linear. Indeed, for $ u, v \in E $ and $ \lambda \in \mathbb{K} $, we have  
$$
T(u + \lambda v) = (u + \lambda v) - \varphi(u + \lambda v)e = \big(u - \varphi(u)e\big) + \lambda\big(v - \varphi(v)e\big) = Tu + \lambda Tv.
$$
Moreover, $ T $ is bijective, and its inverse is given by $ T^{-1} = T $, meaning that $ T^2 = I $, where $ I $ is the identity operator. Indeed, we observe that $ Te = -e $ and, for any $ u \in E $,  
$$
T(Tu) = T\big(u - \varphi(u)e\big) = Tu - \varphi(u)Te = \big(u - \varphi(u)e\big) + \varphi(u)e = u.
$$
We also show that $ T $ is non-continuous because 
$$
\frac{\|Tu_n\|_1}{\|u_n\|_1} = \frac{\|\varphi(u_n)e - u_n\|_1}{\|u_n\|_1} \geq \frac{\varphi(u_n)\|e\|_1 - \|u_n\|_1}{\|u_n\|_1} = n - 1,
$$
and so,
$$
\frac{\|Tu_n\|_1}{\|u_n\|_1} \to +\infty \quad \text{as } n \to+ \infty.
$$ 
Finally, we define a new norm $ \|u\|_2 = \|Tu\|_1 $ on $ E $. Applying Lemma \ref{lemma3.1}-\ref{2.1g}, we conclude that the norms $ \|\cdot\|_1 $ and $ \|\cdot\|_2 $ are not equivalent, since $ T = T^{-1} $ is non-continuous. However, the space $ (E, \|\cdot\|_2) $ is Banach, as $ (E, \|\cdot\|_1) $ is also Banach. Thus, the theorem is proved.	
\end{proof}

\begin{remark}\label{remark3.4}  
	In the construction of the operator $ T $ presented in the proof of Theorem \ref{theorem1.3}, we did not use the fact that $ (E, \|\cdot\|_1) $ is a Banach space. This condition was only employed to ensure that $ (E, \|\cdot\|_2) $ is also a Banach space. Thus, we can conclude that in any infinite-dimensional real or complex normed space $ (E, \|\cdot\|_1) $, it is always possible to construct a linear operator $ T: E \to E $ that is bijective, non-continuous, and has an non-continuous inverse.  
\end{remark}

\begin{corollary}\label{corollary3.1}  
	If $ (E, \|\cdot\|_1) $ is an infinite-dimensional Hilbert space, then there exists a norm $ \|\cdot\|_2 $ such that $ (E, \|\cdot\|_2) $ is also a Hilbert space, and the norms $ \|\cdot\|_1 $ and $ \|\cdot\|_2 $ are not equivalent.  
\end{corollary}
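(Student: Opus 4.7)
The plan is to reuse the construction from the proof of Theorem~\ref{theorem1.3} verbatim and then invoke Lemma~\ref{lemma3.1}\ref{2.1b} to upgrade the resulting non-equivalent Banach norm to a non-equivalent Hilbert norm. Since every Hilbert space is in particular a Banach space, the existence part of the statement (a non-equivalent norm giving another complete structure) is already guaranteed; the only new content to verify is that the norm produced can be chosen so as to come from an inner product.

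Concretely, I would start by observing that $(E,\|\cdot\|_1)$ is an infinite-dimensional Banach space, so the construction in the proof of Theorem~\ref{theorem1.3} (equivalently, the assertion of Remark~\ref{remark3.4} applied to the complete norm $\|\cdot\|_1$) yields a bijective linear operator $T:E\to E$ with $T^{-1}=T$, such that $T$ is non-continuous with respect to $\|\cdot\|_1$, and the norm $\|u\|_2:=\|Tu\|_1$ makes $(E,\|\cdot\|_2)$ a Banach space with $\|\cdot\|_1\not\sim\|\cdot\|_2$.

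Next I would apply Lemma~\ref{lemma3.1}\ref{2.1b} with $F$ taken to be the Hilbert space $(E,\|\cdot\|_1)$ itself and with the bijective linear operator $T$ just constructed. Writing $(\cdot,\cdot)_1$ for the inner product on $E$ that induces $\|\cdot\|_1$, the lemma guarantees that
$$
(u,v)_2 := (Tu,Tv)_1, \qquad u,v\in E,
$$
defines an inner product on $E$, and the norm it induces is exactly $\|Tu\|_1=\|u\|_2$. Combined with the completeness established in the previous step, this shows that $(E,\|\cdot\|_2)$ is a Hilbert space, while by construction $\|\cdot\|_1\not\sim\|\cdot\|_2$.

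There is essentially no obstacle here: all of the analytic work is already contained in Theorem~\ref{theorem1.3} and Lemma~\ref{lemma3.1}, and the only additional remark needed is that Lemma~\ref{lemma3.1}\ref{2.1b} transports inner-product structure, not merely norm structure, across a bijective linear map. The mildest point to double-check is the validity of taking $F=E$ in Lemma~\ref{lemma3.1}, but this is immediate since the lemma requires only that $F$ be a normed (here Hilbertian) space and $T$ a bijective linear operator from $E$ onto $F$.
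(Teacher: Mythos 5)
Your proof is correct and follows essentially the same route as the paper: invoke the operator $T$ from Theorem~\ref{theorem1.3}/Remark~\ref{remark3.4}, set $\|u\|_2=\|Tu\|_1$, and use Lemma~\ref{lemma3.1}\ref{2.1b}, \ref{2.1c}, and \ref{2.1g} to get the inner-product structure, completeness, and non-equivalence, respectively. Your explicit note that $(u,v)_2=(Tu,Tv)_1$ induces $\|\cdot\|_2$ and that taking $F=E$ is legitimate is a welcome, if minor, elaboration of what the paper leaves implicit.
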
  
\begin{proof}  
According to Remark \ref{remark3.4}, we can ensure the existence of a linear operator $ T: (E, \|\cdot\|_1) \to (E, \|\cdot\|_1) $ that is bijective and non-continuous. Based on Lemma \ref{lemma3.1}-\ref{2.1b}-\ref{2.1c}-\ref{2.1g}, we conclude that the norms $ \|\cdot\|_1 $ and $ \|\cdot\|_2 $ are not equivalent in $ E $, where $ \|u\|_2 = \|Tu\|_1 $ for $u\in E$. Furthermore, since $ (E, \|\cdot\|_1) $ is a Hilbert space and $ T $ is a linear bijection, the space $ (E, \|\cdot\|_2) $ is Banach, and the norm $ \|\cdot\|_2 $ is induced by an inner product, completing the proof.  
\end{proof}

\begin{corollary}\label{corollary3.2}  
	If $ (E, \|\cdot\|_1) $ is an infinite-dimensional non-separable normed space, then there exists a norm $ \|\cdot\|_2 $ such that $ (E, \|\cdot\|_2) $ is also non-separable, and the norms $ \|\cdot\|_1 $ and $ \|\cdot\|_2 $ are not equivalent.
\end{corollary}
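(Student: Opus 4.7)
The strategy mirrors the proof of Corollary \ref{corollary3.1} and reduces entirely to the tools already developed: Remark \ref{remark3.4} and Lemma \ref{lemma3.1}. The idea is to take the bijective, non-continuous self-map whose existence in any infinite-dimensional normed space is asserted by Remark \ref{remark3.4}, and use it to transport the norm. Non-separability is then automatic because the transported norm makes $E$ isometric to itself with the original norm, while non-equivalence is automatic because the transporting map was chosen non-continuous.

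In detail, the plan is the following. Since $(E,\|\cdot\|_1)$ is infinite-dimensional, Remark \ref{remark3.4} supplies a bijective linear operator $T:(E,\|\cdot\|_1)\to(E,\|\cdot\|_1)$ such that both $T$ and $T^{-1}$ are non-continuous. Define
\[
\|u\|_2=\|Tu\|_1,\qquad u\in E.
\]
By Lemma \ref{lemma3.1}\ref{2.1a}, applied with $F=(E,\|\cdot\|_1)$, this is a norm on $E$.

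Next I would verify the two claimed properties. For non-equivalence, view $T$ as a map $(E,\|\cdot\|_1)\to(E,\|\cdot\|_1)=:(F,\|\cdot\|_F)$ with $\|\cdot\|_2$ the induced norm; since $T$ is non-continuous, Lemma \ref{lemma3.1}\ref{2.1g} gives $\|\cdot\|_1\not\sim\|\cdot\|_2$. For non-separability, the key observation is that $T:(E,\|\cdot\|_2)\to(E,\|\cdot\|_1)$ is, by the very definition of $\|\cdot\|_2$, a bijective linear isometry; thus applying Lemma \ref{lemma3.1}\ref{2.1d} with this $T$ and with $F=(E,\|\cdot\|_1)$, which by hypothesis is not separable, yields that $(E,\|\cdot\|_2)$ is not separable either.

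There is essentially no combinatorial or analytical obstacle: both Remark \ref{remark3.4} and Lemma \ref{lemma3.1} have already done the work. The only subtle point worth checking carefully is the bookkeeping of which copy of $E$ plays the role of the domain and which plays the role of the codomain $F$ when invoking the respective items of Lemma \ref{lemma3.1}; once that is made explicit, as above, the conclusion is immediate and the proof is no longer than that of Corollary \ref{corollary3.1}.
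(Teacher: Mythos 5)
Your proposal is correct and follows essentially the same route as the paper: invoke Remark \ref{remark3.4} to obtain a bijective non-continuous $T$, set $\|u\|_2=\|Tu\|_1$, and then apply Lemma \ref{lemma3.1}\ref{2.1d} and \ref{2.1g} to get non-separability and non-equivalence, respectively. Your explicit bookkeeping of which copy of $E$ serves as $F$ in item \ref{2.1d} is a slightly more careful rendering of exactly the argument the paper gives.
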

\begin{proof}
By Remark \ref{remark3.4}, there is a linear operator $ T: (E, \|\cdot\|_1) \to (E, \|\cdot\|_1) $, which is bijective and non-continuous. From Lemma \ref{lemma3.1}-\ref{2.1d}-\ref{2.1g}, we can conclude that the norm $ \|u\|_2 = \|Tu\|_1 $ is not equivalent to the norm $ \|\cdot\|_1 $. Furthermore, as $ (E, \|\cdot\|_1) $ is not separable and the operator $ T $ is a linear bijection, the space $ (E, \|\cdot\|_2) $ will also be non-separable.
\end{proof}

Arguing in the same way as in the proof of the Corollary \ref{corollary3.2}, we can conclude that:  

\begin{corollary} \label{corollary3.3} 
	If $ (E, \|\cdot\|_1) $ is an infinite-dimensional separable normed space, then there exists a norm $ \|\cdot\|_2 $ such that $ (E, \|\cdot\|_2) $ is also separable, and the norms $ \|\cdot\|_1 $ and $ \|\cdot\|_2 $ are not equivalent.  
\end{corollary}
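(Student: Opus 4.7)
The plan is to mimic the proof of Corollary \ref{corollary3.2} essentially verbatim, swapping the use of item \ref{2.1d} of Lemma \ref{lemma3.1} for item \ref{2.1e}. First I would invoke Remark \ref{remark3.4} to obtain a bijective linear operator $T : (E, \|\cdot\|_1) \to (E, \|\cdot\|_1)$ that is non-continuous and whose inverse is also non-continuous; this is the engine of the whole argument and exists in any infinite-dimensional normed space, including the separable ones we are considering here.

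Next, I would define the candidate norm by $\|u\|_2 = \|Tu\|_1$ for $u \in E$. That this is indeed a norm is Lemma \ref{lemma3.1}-\ref{2.1a}. Applying Lemma \ref{lemma3.1}-\ref{2.1g} with $T$ viewed as an operator between $(E, \|\cdot\|_1)$ (the source, carrying $\|\cdot\|_1$) and the normed space $(F, \|\cdot\|_F) = (E, \|\cdot\|_1)$ (the target), the non-continuity of $T$ immediately yields $\|\cdot\|_1 \not\sim \|\cdot\|_2$.

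For the separability claim, I would use Lemma \ref{lemma3.1}-\ref{2.1e}: since the target space $F = (E, \|\cdot\|_1)$ is separable by hypothesis, the space $(E, \|\cdot\|_2)$ is also separable. This also shows that $(E,\|\cdot\|_2)$ inherits every isometry-invariant topological property from $(E,\|\cdot\|_1)$, in line with the spirit of Theorem \ref{NewTheorem}.

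There is essentially no obstacle here; the entire result is a direct packaging of Remark \ref{remark3.4} together with items \ref{2.1a}, \ref{2.1e}, and \ref{2.1g} of Lemma \ref{lemma3.1}. The only thing worth being careful about is making sure that the role of $F$ in Lemma \ref{lemma3.1} is played by $(E, \|\cdot\|_1)$ itself (so that the separability hypothesis on $E$ becomes the separability hypothesis on $F$), and that the non-continuity needed in \ref{2.1g} comes from Remark \ref{remark3.4} rather than requiring any new construction; note in particular that completeness of $(E,\|\cdot\|_1)$ is not needed, exactly as in Remark \ref{remark3.4}.
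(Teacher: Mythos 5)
Your proof is correct and is exactly the argument the paper intends: the paper's own justification is simply ``arguing in the same way as in the proof of Corollary \ref{corollary3.2}'', i.e.\ combining Remark \ref{remark3.4} with items \ref{2.1a}, \ref{2.1e} (in place of \ref{2.1d}) and \ref{2.1g} of Lemma \ref{lemma3.1}. Your observation that completeness of $(E,\|\cdot\|_1)$ is not needed is also consistent with the paper's statement.
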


From this last result, the following question arises: {\it Is there a Banach space with two non-equivalent norms such that it is separable with one of them but not with the other?} To answer this question, let us consider a separable Banach space $ (E,\|\cdot\|_1) $ over $ \mathbb{K} $. Thanks to Corollary 6.8 in \cite{Carothers}, we know that $ E $ is isometrically isomorphic to a subspace of $ (\ell^\infty, \|\cdot\|_{\infty}) $, and thus, $ {\rm card}(E) \leq {\rm card}(\ell^\infty) = \mathfrak{c} $. Here, $\| \cdot\|_{\infty}$ denotes the supremum norm in $\ell^{\infty}$. On the other hand, since $ E $ is Banach, it follows from Lemma 3.2-(b) of \cite{Lorenz} that $ {\rm card}(E) \geq \mathfrak{c} $. This shows that every separable Banach space has the cardinality of the continuum. With this in mind, we can proceed to prove Theorem \ref{theorem1.4}.

\begin{proof}[{\bf Proof of Theorem \ref{theorem1.4}.}]
Let $(E, \|\cdot\|_1)$ be a separable Banach space of infinite dimension. As discussed earlier, we know that $\text{card}(E) = \mathfrak{c}$. By Theorem 3.5 of \cite{Lorenz}, any Hamel basis of a real or complex Banach space of infinite dimension has the same cardinality as the space. Therefore, if $\mathcal{B}$ and $\mathcal{B}'$ are Hamel bases for $E$ and $\ell^\infty$, respectively, then 
$$
\text{card}(\mathcal{B}) = \text{card}(E) = \mathfrak{c}
\qquad \text{and} \qquad 
\text{card}(\mathcal{B}') = \text{card}(\ell^\infty) = \mathfrak{c}.
$$
Thus, $\text{card}(\mathcal{B}) = \text{card}(\mathcal{B}')$, that is, there exists a bijective function from $\mathcal{B}$ to $\mathcal{B}'$. This allows us to arbitrarily assign to each vector $u \in \mathcal{B}$ a unique vector $f_u \in \mathcal{B}'$. Consequently, by Proposition 2.13 of \cite{Knapp}, there exists a unique linear operator $T: E \to (\ell^\infty, \|\cdot\|_\infty)$ such that 
$$
Tu = f_u, \quad \forall \ u \in \mathcal{B}, \quad \text{where } f_u \in \mathcal{B}'.
$$
Let us now verify that $T$ is bijective. To prove that $T$ is injective, let $u, v \in E$ be such that $Tu = Tv$. Thereby, we can write $u = a_1u_1 + \cdots + a_nu_n$ and $v = b_1u_1 + \cdots + b_nu_n$, where $a_j, b_j$ are scalars and $u_j \in \mathcal{B}$. Thus,
$$
a_1 Tu_1 + \cdots + a_n Tu_n = b_1 Tu_1 + \cdots + b_n Tu_n.
$$
However, since $\{Tu_1, \ldots, Tu_n\} \subset \mathcal{B}'$ is linearly independent, we obtain $a_j = b_j$ for all $j = 1, \ldots, n$, showing that $u = v$, which establishes the injectivity of $T$. On the other hand, given $g \in \ell^\infty$, there exist scalars $a_1, \ldots, a_n$ and vectors $f_1, \ldots, f_n \in \mathcal{B}'$ such that $g = a_1f_1 + \cdots + a_nf_n$. By the definition of $T$, there exist $u_1, \ldots, u_n \in \mathcal{B}$ such that $Tu_i = f_i$ for all $i = 1, \ldots, n$. Consequently,
$$
g = a_1f_1 + \cdots + a_nf_n = a_1 Tu_1 + \cdots + a_n Tu_n = T(a_1u_1 + \cdots + a_nu_n),
$$
which proves that $T$ is surjective. Therefore, since $T$ is a bijective linear operator, based on Lemma \ref{lemma3.1}-\ref{2.1a}-\ref{2.1c}-\ref{2.1d}, we obtain:
\begin{enumerate}
	\item[1.] $\|u\|_2 := \|Tu\|_{\infty}$ defines a norm on $E$;
	
	\item[2.] $(E, \|\cdot\|_2)$ is a Banach space, since $\ell^\infty$ is a Banach space;
	
	\item[3.] $(E, \|\cdot\|_2)$ is not separable, since $\ell^\infty$ is not separable.
\end{enumerate}
Finally, to prove the non-equivalence of the norms assume for contradiction that there exists a constant $c > 0$ such that
$$
\|u\|_1 \leq c\|u\|_2, \quad \forall\,\, u \in E.
$$
This shows that the identity operator $I: (E, \|\cdot\|_2) \to (E, \|\cdot\|_1)$ between Banach spaces is continuous. Hence, by Corollary 2.8 in \cite{Brezis}, we conclude that the norms $\|\cdot\|_1$ and $\|\cdot\|_2$ are equivalent. However, as $(E, \|\cdot\|_1)$ is separable, $(E, \|\cdot\|_2)$ should also be separable, but this is a contradiction. Therefore, $ \|\cdot\|_1 $ and $ \|\cdot\|_2 $ are not equivalent. This completes the proof.	
\end{proof}

\begin{remark}
	Note that the operator $ T $ constructed in the proof of Theorem \ref{theorem1.4} has the property that $ T $ or $ T^{-1} $ cannot be continuous when $E$ is equipped with the norm $\|\cdot \|_1$, as stated in Lemma \ref{lemma3.1}, item \ref{2.1g}.
\end{remark}

In contrast with Corollary \ref{corollary3.1}, we have the following result:

\begin{corollary}
	For every separable infinite-dimensional Hilbert space $(E, \|\cdot\|_1)$, there exists another norm $\|\cdot\|_2$ that is not induced by an inner product, such that $(E, \|\cdot\|_2)$ is a non-separable Banach space and $\|\cdot\|_1 \not\sim \|\cdot\|_2$.
\end{corollary}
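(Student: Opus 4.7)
The plan is to use the construction already given in the proof of Theorem \ref{theorem1.4}. Since a separable infinite-dimensional Hilbert space is, in particular, a separable infinite-dimensional Banach space, Theorem \ref{theorem1.4} immediately provides a norm $\|\cdot\|_2$ on $E$ such that $(E, \|\cdot\|_2)$ is a non-separable Banach space and $\|\cdot\|_1 \not\sim \|\cdot\|_2$. What is left to verify is that this specific $\|\cdot\|_2$ is \emph{not} induced by an inner product.

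Recall that the norm $\|\cdot\|_2$ produced in the proof of Theorem \ref{theorem1.4} has the form $\|u\|_2 = \|Tu\|_\infty$, where $T: E \to \ell^\infty$ is a bijective linear operator obtained from a bijection between Hamel bases of $E$ and $\ell^\infty$. The natural obstruction I would exploit is the parallelogram law. If $\|\cdot\|_2$ were induced by an inner product, then for all $u,v \in E$ we would have
\begin{equation*}
\|u+v\|_2^2 + \|u-v\|_2^2 = 2\|u\|_2^2 + 2\|v\|_2^2,
\end{equation*}
which, upon setting $x = Tu$ and $y = Tv$ and using the linearity of $T$, would translate into the parallelogram identity on the entire image of $T$.

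Since $T$ is surjective onto $\ell^\infty$, this would force the parallelogram law to hold on all of $(\ell^\infty, \|\cdot\|_\infty)$. But it is well known (and trivial to exhibit, e.g.\ with $x = e_1$ and $y = e_2$, for which $\|x\pm y\|_\infty = 1$ while $\|x\|_\infty = \|y\|_\infty = 1$, giving $1+1 \neq 2+2$) that $\|\cdot\|_\infty$ on $\ell^\infty$ fails the parallelogram law. Pulling back this counterexample through $T^{-1}$ yields two vectors in $E$ at which $\|\cdot\|_2$ violates the parallelogram identity, so $\|\cdot\|_2$ cannot come from any inner product.

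The main obstacle is essentially none: the heavy lifting (construction of $T$, the fact that $(E,\|\cdot\|_2)$ is a non-separable Banach space, and the non-equivalence with $\|\cdot\|_1$) is already handled by Theorem \ref{theorem1.4}. The only genuinely new ingredient is the observation that $(\ell^\infty, \|\cdot\|_\infty)$ is not a Hilbert space, which, together with surjectivity of $T$, transfers to $\|\cdot\|_2$. Note that the hypothesis that $(E,\|\cdot\|_1)$ is Hilbert is used only to provide a sharp contrast with Corollary \ref{corollary3.1}; it plays no role in the argument itself beyond ensuring separability and Banach structure.
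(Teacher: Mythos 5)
Your proposal is correct and follows essentially the same route as the paper: apply Theorem \ref{theorem1.4} to get the non-separable Banach norm $\|u\|_2 = \|Tu\|_\infty$ with $\|\cdot\|_1 \not\sim \|\cdot\|_2$, then use the surjectivity of $T$ to transfer the (hypothetical) parallelogram identity from $\|\cdot\|_2$ to $\|\cdot\|_\infty$ on $\ell^\infty$, where it visibly fails. Your explicit counterexample $x = e_1$, $y = e_2$ is a nice concrete touch that the paper leaves implicit, but the argument is the same.
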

\begin{proof}
From the proof of Theorem \ref{theorem1.4}, we know that there exists a bijective linear operator $T: E \to \ell^\infty$ such that $\|u\|_2 = \|Tu\|_\infty$ defines a norm on $E$ that is not equivalent to $\|\cdot\|_1$, and $(E, \|\cdot\|_2)$ is a Banach and non-separable space. Finally, to show that $\|\cdot\|_2$ is not induced by an inner product, assume, by contradiction, that it is, i.e., that $\|\cdot\|_2$ satisfies the parallelogram identity \cite{Jordan}. Thus, for each $w, z \in \ell^\infty$, there exist $u, v \in E$ such that $Tu = w$ and $Tv = z$. Consequently,  
$$  
\begin{aligned}  
	\|w+z\|_{\infty}^{2} + \|w-z\|_{\infty}^2  
	&= \|T(u+v)\|_{\infty}^2 + \|T(u-v)\|_{\infty}^2 \\  
	&= \|u+v\|_{2}^2 + \|u-v\|_{2}^2 \\  
	&= 2\left( \|u\|_{2}^2 + \|v\|_{2}^2 \right) \\  
	&= 2\left( \|w\|_{\infty}^2 + \|z\|_{\infty}^2 \right)  
\end{aligned}  
$$  
which shows that $\|\cdot\|_{\infty}$ satisfies the parallelogram identity, a contradiction.  
\end{proof}

We will now present another consequence of Theorem \ref{theorem1.4}.

\begin{corollary}
	Let $(E, \|\cdot\|_1)$ be a separable Banach space. Among the unit spheres in $E$, there exists at least one that is separable and one that is not.
\end{corollary}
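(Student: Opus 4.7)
The plan is to produce the two unit spheres by equipping $E$ with two different norms. The easy half uses $\|\cdot\|_1$ directly: since $(E, \|\cdot\|_1)$ is separable as a metric space, so is every subset of it; in particular the unit sphere $S_1 = \{u \in E : \|u\|_1 = 1\}$ is separable.

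For the non-separable sphere, I would invoke Theorem \ref{theorem1.4} to obtain a norm $\|\cdot\|_2$ on $E$, not equivalent to $\|\cdot\|_1$, such that $(E, \|\cdot\|_2)$ is a non-separable Banach space. I then claim that its unit sphere $S_2 = \{u \in E : \|u\|_2 = 1\}$ must also be non-separable. This reduces to the following general fact, which I would prove by contraposition: in any normed space $(X, \|\cdot\|)$, if the unit sphere $S_X$ is separable, then $X$ itself is separable. Applying this with $(X, \|\cdot\|) = (E, \|\cdot\|_2)$ would yield the desired conclusion, since $(E, \|\cdot\|_2)$ is non-separable.

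To establish the auxiliary fact, let $D = \{v_n\}_{n \in \mathbb{N}}$ be a countable dense subset of $S_X$ and consider the countable set $A = \{q v_n : q \in \mathbb{Q}, \, n \in \mathbb{N}\} \cup \{0\}$. For any nonzero $u \in X$, write $r = \|u\|$ and $v = u/r \in S_X$. Given $\varepsilon > 0$, I would choose a rational $q$ with $|r - q|$ sufficiently small and then $v_n \in D$ with $\|v - v_n\|$ sufficiently small; the triangle-inequality estimate $\|u - qv_n\| \leq |r - q| + |q|\,\|v - v_n\|$ then forces $u$ to lie in the closure of $A$, so $A$ is dense in $X$ and $X$ is separable.

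The main obstacle here is essentially negligible: the corollary is a straightforward packaging of Theorem \ref{theorem1.4} with the one-line triangle-inequality lemma above, and no genuinely new idea is required beyond observing that separability of the sphere controls separability of the whole space.
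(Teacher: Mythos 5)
Your proposal is correct and follows essentially the same route as the paper: both obtain the second norm from Theorem \ref{theorem1.4}, observe that $S_1$ inherits separability from $(E,\|\cdot\|_1)$, and show that a countable dense subset of the unit sphere would generate, via rational scalar multiples and the same triangle-inequality estimate, a countable dense subset of the whole space, contradicting non-separability of $(E,\|\cdot\|_2)$. Your phrasing of this last step as a standalone lemma proved by contraposition, rather than a direct contradiction, is only a cosmetic difference.
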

\begin{proof}
Since $(E, \|\cdot\|_1)$ is a separable Banach space, by Theorem \ref{theorem1.4}, there exists a norm $\|\cdot\|_2: E \to \mathbb{K}$ such that $\| \cdot \|_1 \not\sim \| \cdot \|_2$ and $(E, \|\cdot\|_2)$ is a non-separable Banach space. Now, consider the unit spheres 
$$
S_1 = \{x \in E : \|x\|_1 = 1\} \quad \text{and} \quad S_2 = \{x \in E : \|x\|_2 = 1\}.
$$
Note that $S_1$ is separable, because $(E, \|\cdot\|_1)$ is separable. On the other hand, we claim that $S_2$ is non-separable. To prove this by contradiction, assume there exists a countable dense subset $A$ of $S_2$. Now, we consider the following subset of $E$:
$$
D= \left\{ta \in E \mid t \in \mathbb{Q} \text{ and } a \in A\right\},
$$
where $\mathbb{Q}$ is the set of rational numbers. Since $A$ and $\mathbb{Q}$ are countable, we have that $D$ is also countable. Now, let us show that $\overline{D}=E$. Indeed, for any $x \in E \setminus \{0\}$, there exist sequences $(t_n)$ in $\mathbb{Q}$ and $(a_n)$ in $A$ such that
\begin{equation}\label{NewEquation1}
	t_n \to \|x\|_2 \text{ in } \mathbb{R} \quad \text{and} \quad a_n \to \frac{x}{\|x\|_2} \text{ in } E,
\end{equation}
since $\overline{\mathbb{Q}} = \mathbb{R}$ and $\overline{A} = S_2$. We observe that $(t_n a_n)\in D$ for all $n\in\mathbb{N}$ and
\begin{equation}\label{NewEquation2}
	\|t_na_n-x\|_2\le\left|t_n -\|x\|_2\right|\|a_n\|_2+\left\|\|x\|_2a_n-x\right\|_2.
\end{equation}
Combining \eqref{NewEquation1} and \eqref{NewEquation2}, we deduce that $t_n a_n \to x$ in $E$. Therefore, we conclude from this convergence that $\overline{D} = E$, meaning that $(E, \|\cdot\|_2)$ is separable, which is a contradiction, and the result follows.
\end{proof}

The idea of the proof of Theorem \ref{theorem1.4} motivates the proof of Theorem \ref{NewTheorem}. Now, before presenting the proof of Theorem \ref{NewTheorem}, it is useful to introduce the following: we say that a property $P$ is \textit{preserved under equivalent norms} if, for two normed spaces $(E, \|\cdot\|_1)$ and $(E, \|\cdot\|_2)$, the norms $\|\cdot\|_1$ and $\|\cdot\|_2$ are equivalent, and $E$ has the property $P$ with respect to the norm $\|\cdot\|_1$ if and only if $E$ also has it with respect to the norm $\|\cdot\|_2$. Moreover, we say that $P$ is \textit{preserved under isometries} if, for two normed spaces $(E, \|\cdot\|_E)$ and $(F, \|\cdot\|_F)$, and a bijective isometry $T: E \to F$, if $E$ has the property $P$ with respect to $\|\cdot\|_E$, then $F$ also has the property $P$ with respect to $\|\cdot\|_F$. Below are examples of properties $ P $ that are invariant under equivalent norms and under isometries in Banach spaces:

\begin{enumerate}  
	\item[1.] \textbf{Reflexivity:} A Banach space $ E $ is reflexive if the natural embedding $ J_E: E \to E'' $ is a continuous isomorphism.   
	
	\item[2.] \textbf{Schur Property:} A Banach space $ E $ has the Schur property if every weakly convergent sequence in $ E $ is norm-convergent. 
	
	\item[3.] \textbf{Approximation Property (AP):} A Banach space $ E $ has the AP if the identity operator on $ E $ can be approximated uniformly on compact sets by finite-rank operators. 
	
	\item[4.] \textbf{Banach-Saks Property:} A Banach space $ E $ has the Banach-Saks property if, for any sequence $ (x_n) $ in $ E $, there exists a subsequence $ (x_{n_k}) $ such that the sequence of its arithmetic means
	$$
	y_N = \frac{1}{N} \sum_{k=1}^N x_{n_k}
	$$
	converges strongly (in norm) to some element $ y \in E $.
	
	\item[5.] \textbf{Dunford-Pettis Property:} A Banach space $ E $ has the Dunford-Pettis property if every continuous weakly compact operator $ T: E \to F $ from $ E$ into another Banach space $ F $ transforms weakly compact sets in $ E $ into norm-compact sets in $ F $.  
\end{enumerate} 

We are now in a position to prove Theorem \ref{NewTheorem}. 

\begin{proof}[{\bf Proof of Theorem \ref{NewTheorem}.}]
Let $(E, \|\cdot\|_1)$ be a Banach space that satisfies a property $ P $ that is preserved under equivalent norms, and let $ (F,\|\cdot\|_F )$ be a Banach space with the same cardinality as $ E $ that does not satisfy $ P $. Since $ E $ and $ F $ have the same cardinality, from \cite[Theorem 3.5]{Lorenz}, any Hamel basis $\mathcal{B}$ of $E$ and any Hamel basis $\mathcal{B}'$ of $F$ have the same cardinality. Consequently, we can argue as in Theorem \ref{theorem1.4} to deduce the existence of a bijective linear operator $ T: E \to F $. We then define a norm $ \|\cdot\|_2 $ on $ E $ by 
$$
\|x\|_2 = \|Tx\|_F,\quad x\in E.
$$ 
Since $ T $ is an isometry, the space $ (E, \|\cdot\|_2) $ is a Banach space. Now, assume that there exists a constant $ c > 0 $ such that $ \|x\|_1 \leq c\|x\|_2 $ for all $ x \in E $. This implies that the norms $ \|\cdot\|_1 $ and $ \|\cdot\|_2 $ are equivalent. Since $ (E, \|\cdot\|_1) $ satisfies the property $ P $ and $ P $ is preserved under equivalent norms, it follows that $ (E, \|\cdot\|_2) $ also satisfies $ P $. However, since $ (E, \|\cdot\|_2) $ is isometric to $ (F, \|\cdot\|_F) $, this would imply that $ F $ satisfies $ P $, since $P$ is invariant under bijective isometries, which contradicts our assumption. Therefore, there cannot exist a constant $ c > 0 $ such that $ \|x\|_1 \leq c\|x\|_2 $ for all $ x \in E $. Hence, the norms $ \|\cdot\|_1 $ and $ \|\cdot\|_2 $ are not equivalent, and by isometry, the space $ (E, \|\cdot\|_2) $ does not satisfy the property $ P $.	This completes the proof.
\end{proof}
	
\begin{corollary}  
	If $(E, \|\cdot\|_1)$ is a reflexive Banach space with the cardinality of the continuum $\mathfrak{c}$, then there exists a norm $\|\cdot\|_2$ that is not equivalent to $\|\cdot\|_1$ and such that $(E, \|\cdot\|_2)$ is not reflexive.  
\end{corollary}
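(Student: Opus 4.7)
The plan is to invoke Theorem \ref{NewTheorem} directly, taking $P$ to be the property of reflexivity. To do so, I need to verify that reflexivity fits the abstract framework of that theorem and to exhibit a witness non-reflexive Banach space of cardinality $\mathfrak{c}$.

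First, I would check the two invariance requirements. Reflexivity is preserved under equivalent norms because if $\|\cdot\|_1\sim\|\cdot\|_2$ on $E$, then the identity map $I\colon(E,\|\cdot\|_1)\to(E,\|\cdot\|_2)$ is a topological isomorphism, so it induces a topological isomorphism of the dual spaces and intertwines the canonical embeddings $J_{(E,\|\cdot\|_1)}$ and $J_{(E,\|\cdot\|_2)}$; hence surjectivity of one embedding forces surjectivity of the other. Reflexivity is also preserved under bijective isometries, since an isometric isomorphism $T\colon(E,\|\cdot\|_E)\to(F,\|\cdot\|_F)$ induces an isometric isomorphism $T^{**}\colon E^{**}\to F^{**}$ that again intertwines the canonical embeddings.

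Second, I would produce a non-reflexive Banach space $F$ of cardinality $\mathfrak{c}$. The simplest choice is $F=c_0$ with its supremum norm (or $\ell^1$, either works): $c_0$ is separable, hence by the discussion preceding Theorem \ref{theorem1.4} it has cardinality $\mathfrak{c}$, which matches the cardinality of $E$ by hypothesis; and $c_0$ fails reflexivity, since $c_0''=\ell^\infty\neq c_0$.

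With these two ingredients in place, Theorem \ref{NewTheorem} applied to $(E,\|\cdot\|_1)$ and $F=c_0$ directly produces a norm $\|\cdot\|_2$ on $E$ such that $(E,\|\cdot\|_2)$ is a Banach space not equivalent to $\|\cdot\|_1$ and failing reflexivity, which is exactly the conclusion sought. There is no real obstacle here: all the hard work (constructing the bijective linear map between $E$ and $F$ via Hamel bases of equal cardinality, and transferring the norm) has already been done in the proof of Theorem \ref{NewTheorem}; the only thing to be careful about is citing standard facts about reflexivity cleanly, and choosing a convenient non-reflexive Banach space of the prescribed cardinality.
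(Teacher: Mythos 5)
Your proposal is correct and follows essentially the same route as the paper: both apply Theorem \ref{NewTheorem} with $P$ equal to reflexivity and a non-reflexive witness space of cardinality $\mathfrak{c}$ (the paper uses $\ell^\infty$ where you use $c_0$, an immaterial difference). Your explicit verification that reflexivity is preserved under equivalent norms and under isometric isomorphisms is a welcome addition, since the paper only asserts this in its list of examples.
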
  
\begin{proof}  
It suffices to consider the space $\ell^\infty$, since $\ell^\infty$ is non-reflexive and has the cardinality of the continuum. Therefore, applying Theorem \ref{NewTheorem}, there exists a norm $\|\cdot\|_2$ on $E$ such that $\|\cdot\|_2$ is not equivalent to $\|\cdot\|_1$ and $(E, \|\cdot\|_2)$ is not reflexive.  
\end{proof}  

To conclude this section, we would like to highlight that not every property $P$ is preserved under equivalent norms. A notable example occurs in $\mathbb{R}^2$, where the Euclidean norm $\|\cdot\|_e$ and the sum norm $\|\cdot\|_s$ are equivalent, but only the former guarantees that the space is uniformly convex. Furthermore, the Theorem \ref{NewTheorem} presents some limitations, such as the assumption of the existence of a Banach space $F$ with the same cardinality as $E$ that does not satisfy $P$. Although it is possible to guarantee that for each cardinality $\kappa$ there exists a vector space with a Hamel basis of size \(\kappa\), the construction of Banach spaces with specific structures for each \(\kappa \geq \mathfrak{c}\) remains an open question. In particular, when \(\kappa = \aleph_0\), we know from Baire’s Theorem that an infinite Banach space cannot have a countable Hamel basis. Thus, Theorem \ref{NewTheorem} does not apply in the case $\kappa = \aleph_0$.


\section{Vector spaces with the $p$-norm}\label{Sec4}

The main objective of this section is to present a proof of Theorem \ref{theorem1.1}. Following the reasoning of the previous sections, to prove it, it suffices to find a non-complete normed space $(F,\|\cdot \|_F)$ that is isometric to $(E, \|\cdot\|_p)$, where $\|\cdot\|_p$ is the $p$-norm defined in \eqref{p-norm}. To do this, note that, given a Hamel basis for $E$, each element $u$ has a finite number of nonzero coordinates. This allows us to identify it as a function with finite support, that is, a function that takes a finite number of nonzero values. With this concept in mind, we now introduce a class of spaces that will enable us to solve the problem. Let $ I $ be a nonempty set and $ \mu $ be the counting measure on $ I $, defined on the $ \sigma $-algebra of all subsets of $ I $ (see \cite{Axler,Jones}). For each $ 1\leq p<\infty $, the space $ \ell^p(I) $ is defined as  
$$
\ell^p(I) = \left\{ u : I\to \mathbb{K} : \int_{I} |u|^p \,  \mathrm{d}\mu <\infty  \right\},  
$$
and the space $ \ell^{\infty}(I) $ as  
$$
\ell^{\infty}(I) = \left\{ u:I \to \mathbb{K}: \sup_{k\in I}|u(k)|<\infty \right\}.  
$$
Since the only set with zero measure is the empty set, we obtain that $\ell^p(I)$ is the Lebesgue space $L^p(I,2^{I},\mu)$, where $2^{I}$ is the set of parts of $I$. Thus, the space $ \ell^p(I) $, equipped with the norm  
$$
\|u\|_{\ell^p} = \left( \int_{I} |u|^{p} \, \mathrm{d}\mu \right)^{1/p}  
\quad \text{and} \quad  
\|u\|_{\ell^{\infty}} = \sup_{k \in I} |u(k)|,  
$$
is a Banach space. Now, consider the space of functions with finite support, defined as  
$$
C_{c}(I)=\Big\{u:I\to \mathbb{K}: u(k)\ne 0 \; \text{for a finite number of indices } k\in I   \Big\}.
$$
Regarding this space, we establish the following properties:

\begin{lemma}\label{lemma4.1}  
	Let $ I $ be an infinite set and $ 1 \leq p < \infty $. Then:  
	\begin{enumerate}[label=(\alph*)]  
		\item \label{4.1a} $ (C_{c}(I) \|\cdot\|_{\ell^p}) $ is not a Banach space;  
		\item \label{4.1b} $ (C_{c}(I), \|\cdot\|_{\ell^p}) $ is separable if and only if $ I $ is countable.  
	\end{enumerate}  
\end{lemma}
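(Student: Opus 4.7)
The plan is to embed $C_c(I)$ as a dense, proper subspace of the Banach space $\ell^p(I)$, and then read off both conclusions from standard facts.

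For part \ref{4.1a}, I would first observe that $C_c(I) \subset \ell^p(I)$ isometrically, and show that $C_c(I)$ is dense in $\ell^p(I)$: given $u \in \ell^p(I)$, since $\int_I |u|^p \, \mathrm{d}\mu < \infty$ with respect to counting measure, the support $\{k \in I : u(k) \neq 0\}$ is at most countable, enumerate it as $\{k_n\}$, and the truncations $u_N = u \cdot \chi_{\{k_1,\dots,k_N\}}$ lie in $C_c(I)$ and satisfy $\|u - u_N\|_{\ell^p}^p = \sum_{n>N} |u(k_n)|^p \to 0$. Next, because $I$ is infinite I would pick a countable sequence $\{k_n\}_{n\in\mathbb{N}} \subset I$ and define $u(k_n) = 1/n^{2/p}$ and $u(k)=0$ otherwise; then $u \in \ell^p(I) \setminus C_c(I)$, so $C_c(I)$ is a proper dense subspace of $\ell^p(I)$, which rules out completeness (its closure in $\ell^p(I)$ would have to be $C_c(I)$ itself). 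Alternatively, the truncation sequence for this particular $u$ is explicitly Cauchy in $C_c(I)$ with no limit in $C_c(I)$.

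For part \ref{4.1b}, the forward implication ($I$ countable $\Rightarrow$ separable) is the routine one: enumerate $I$ and take the countable set $D$ of finitely supported functions with values in $\mathbb{Q}$ (or $\mathbb{Q}+i\mathbb{Q}$), then approximate the finitely many nonzero coordinates of any $u \in C_c(I)$ coordinatewise. The contrapositive of the reverse implication is the interesting part: assume $I$ is uncountable and consider the family $\{e_k\}_{k\in I} \subset C_c(I)$ where $e_k$ is the indicator of $\{k\}$. A direct computation gives $\|e_k - e_j\|_{\ell^p} = 2^{1/p}$ for $k \neq j$, so the open balls $B(e_k, 2^{1/p}/2)$ are pairwise disjoint. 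If $D \subset C_c(I)$ were countable and dense, each such ball would have to contain a distinct element of $D$, contradicting countability.

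I don't anticipate a serious obstacle: the only subtlety is making precise that the sum defining $\|u\|_{\ell^p}$ in the counting-measure sense agrees with the coordinatewise $p$-summation and that support of any $\ell^p$-function is at most countable, both of which are immediate from the integral being finite under counting measure. Everything else is either a direct estimate or a density/cardinality argument.
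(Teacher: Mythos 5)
Your proposal is correct and follows essentially the same route as the paper: the non-completeness witness $u(k_n)=n^{-2/p}$ with its truncations is exactly the paper's sequence, and the non-separability argument via the $2^{1/p}$-separated family $\{\chi_{\{k\}}\}_{k\in I}$ is identical. The only cosmetic differences are your extra (unneeded but harmless) observation that $C_c(I)$ is dense in $\ell^p(I)$, and your use of rational-valued finitely supported functions for the countable case where the paper writes $C_c(I)$ as a countable union of finite-dimensional subspaces; both are standard and equivalent.
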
  
\begin{proof}  
To prove \ref{4.1a}, consider a subset $ \{k_m\}_{m\in\mathbb{N}} $ of $ I $ and define the functions  
$$  
u = \sum_{m=1}^{\infty} m^{\alpha} \, \chi_{\{k_m\}}  
\qquad \text{and} \qquad  
u_{n} = \sum_{m=1}^{n} m^{\alpha} \, \chi_{\{k_m\}},  
$$  
where $\alpha = \dfrac{-2}{p}$. By definition, note that $ u \notin C_{c}(I) $ and $ (u_n) \subset C_{c}(I) $. Moreover,  
$$  
\|u - u_n\|_{\ell^p}^p  
= \int_{I} \left| \sum_{m=n+1}^{\infty} m^{\alpha} \, \chi_{\{k_m\}} \right|^p \, \mathrm{d}\mu  
= \int_{I} \sum_{m=n+1}^{\infty} m^{-2} \, \chi_{\{k_m\}} \, \mathrm{d}\mu  
= \sum_{m=n+1}^{\infty} \frac{1}{m^2}.  
$$  
Taking the limit as $ n \to \infty $, we obtain $ u_n \to u $ in $ \ell^p(I) $. This shows that $ C_{c}(I) $ is not a closed subspace of $ \ell^p(I) $, and thus $ (C_{c}(I), \|\cdot\|_{\ell^p}) $ cannot be a Banach space. To prove item \ref{4.1b}, assume first that $ C_c(I) $ is separable and suppose, for contradiction, that $ I $ is uncountable. Then, the set of characteristic functions $ \{\chi_{\{k\}}\}_{k \in I} \subset C_{c}(I) $ is uncountable, and  
$$  
\|\chi_{\{k\}} - \chi_{\{n\}}\|_{\ell^p}^p = \int_{I} |\chi_{\{k\}} - \chi_{\{n\}}|^p \, \mathrm{d}\mu = \int_{I} \chi_{\{k\}} + \chi_{\{n\}} \, \mathrm{d}\mu = 2 \quad \forall\,\, k, n \in I, \; k \ne n.  
$$  
Thus, $ C_{c}(I) $ cannot be separable, which is a contradiction. Conversely, without loss of generality, assume $ I = \mathbb{N} $. In this case, we can write  
$$  
C_c(I) = \bigcup_{k \in \mathbb{N}} E_k \quad \text{where} \quad E_k = \mathrm{span}\{\chi_{\{1\}}, \ldots, \chi_{\{k\}} \}.  
$$  
Since every finite-dimensional normed space is separable, $ (E_k, \|\cdot\|_{\ell^p}) $ is separable. Therefore, the countable union of these subspaces is also separable. Thereby, we conclude that $ (C_c(I), \|\cdot\|_{\ell^p}) $ is separable. This completes the proof of \ref{4.1b}.  
\end{proof}  

\begin{proof}[{\bf Proof of Theorem \ref{theorem1.1} for the case $1\leq p<\infty$.}]
First, suppose that $ \dim(E) < \infty $. In this case, we have that $ E $ is a Banach space. Now, assume that $ (E, \|\cdot\|_p) $ is a Banach space and, for the sake of contradiction, suppose that $ \dim(E) = \infty $. Thereby, there exists a basis $ \mathcal{B} = \{e_k\}_{k \in I} $ of $ E $ with infinitely many elements, where $I$ denotes the index set. We then define the function $ T: C_c(I) \to E $ by  
$$
Tu = \sum_{k \in J} u(k) e_k, 
\quad \text{where} \quad 
u = \sum_{k \in J} u(k) \chi_{\{k\}},
$$
where $ J \subset I $ is finite. The function $ T $ is linear since, for every $ u, v \in C_c(I) $ and $ \lambda \in \mathbb{K} $, we can write  
$$
u = \sum_{k \in J} u(k) e_k \quad \text{and} \quad v = \sum_{k \in J} v(k) e_k,
$$
for some finite $ J \subset I $, and thus,  
$$
T(u + \lambda v) = \sum_{k \in J} (u(k) + \lambda v(k)) e_k = Tu + \lambda Tv.
$$
Furthermore, $ T $ is surjective. Indeed, given $ w \in E $, by the definition of a basis, there exists a finite set of indices $ J \subset I $ and scalars $ \alpha_k $ such that  
$$
w = \sum_{k \in J} \alpha_k e_k.
$$
Defining  
$$
u = \sum_{k \in J} \alpha_k \chi_{\{k\}},
$$
we have $ u \in C_c(I) $ and $ Tu = w $. We also observe that 
$$
\|u\|_{\ell^p} = \|Tu\|_p,\quad \text{for all } u \in C_c(I),
$$ 
since  
$$
\|u\|_{\ell^p}^p 
= \int_{I} \left| \sum_{k \in J} u(k) \chi_{\{k\}} \right|^p \, \mathrm{d}\mu 
= \int_{I} \sum_{k \in J} |u(k)|^p \chi_{\{k\}} \, \mathrm{d}\mu 
= \sum_{k \in J} |u(k)|^p = \|Tu\|_p^p.
$$
In particular, $ T $ is injective. Now, since $ T $ is a bijective linear operator and $ (E, \|\cdot\|_p) $ is a Banach space, by Lemma \ref{lemma3.1}-\ref{2.1c}, we should have that $ (C_c(I), \|\cdot\|_{\ell^p}) $ is Banach, where $ \|\cdot\|_{\ell^p} = \|T(\cdot)\|_p $. However, this contradicts Lemma \ref{lemma4.1}-\ref{4.1a}, which states that $ C_c(I) $ is not Banach space when $ I $ is infinite. Therefore, we conclude that $ E $ must have finite dimension.	
\end{proof}

\begin{corollary}
	Let $ E $ be a nontrivial vector space. Then, the space $ (E, \|\cdot\|_p) $ is separable if and only if the Hamel bases of $ E $ are countable.  
\end{corollary}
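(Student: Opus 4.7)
The plan is to reuse the isometric identification $T:C_c(I)\to(E,\|\cdot\|_p)$ constructed in the proof of Theorem \ref{theorem1.1} and to transfer the separability question from $E$ to $C_c(I)$, where Lemma \ref{lemma4.1}\ref{4.1b} has already characterised separability in terms of the cardinality of the index set. In effect, this corollary is the separability analogue of Theorem \ref{theorem1.1}: the same bijective linear map does all the work, with Lemma \ref{lemma4.1}\ref{4.1a} replaced by Lemma \ref{lemma4.1}\ref{4.1b}.

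First I would dispose of the finite-dimensional case. If $\dim E<\infty$, any Hamel basis of $E$ is finite, hence countable, and $(E,\|\cdot\|_p)$ is automatically separable since every finite-dimensional normed space is separable. So both sides of the stated equivalence hold trivially.

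Now assume $\dim E=\infty$, and fix a Hamel basis $\mathcal{B}=\{e_k\}_{k\in I}$. Since any two Hamel bases of $E$ are equinumerous, the statement ``the Hamel bases of $E$ are countable'' is equivalent to ``$I$ is countable.'' The map $T:C_c(I)\to E$ defined in the proof of Theorem \ref{theorem1.1} is linear and bijective, and the computation carried out there shows $\|Tu\|_p=\|u\|_{\ell^p}$ for every $u\in C_c(I)$. Consequently $T$ is an isometric isomorphism between $(C_c(I),\|\cdot\|_{\ell^p})$ and $(E,\|\cdot\|_p)$.

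The conclusion then follows by pushing separability through $T$. Applying items \ref{2.1d} and \ref{2.1e} of Lemma \ref{lemma3.1} to this isometric isomorphism (in both directions), $(E,\|\cdot\|_p)$ is separable if and only if $(C_c(I),\|\cdot\|_{\ell^p})$ is separable, and by Lemma \ref{lemma4.1}\ref{4.1b} this happens if and only if $I$ is countable, i.e.\ if and only if the Hamel bases of $E$ are countable. I do not anticipate any serious obstacle here; the only detail worth flagging is the basis-independence of the cardinality of $I$, which is needed to pass from ``some Hamel basis is countable'' to the statement about all Hamel bases and is immediate from the equinumerosity of Hamel bases.
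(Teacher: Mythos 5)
Your proposal is correct and follows essentially the same route as the paper: transfer separability through the isometric isomorphism $T:(C_c(I),\|\cdot\|_{\ell^p})\to(E,\|\cdot\|_p)$ built in the proof of Theorem \ref{theorem1.1}, apply Lemma \ref{lemma4.1}\ref{4.1b}, and invoke the equinumerosity of Hamel bases. Your separate treatment of the finite-dimensional case is a harmless (and slightly more careful) addition, since Lemma \ref{lemma4.1} is stated for infinite $I$.
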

\begin{proof}  
Let $ \mathcal{B} = \{e_k\}_{k \in I} $ be a Hamel basis for $ E $, where $I$ is the index set. From the previous proof, we know that there exists an isometric isomorphism $ T: (C_c(I), \|\cdot\|_{\ell^p}) \to (E, \|\cdot\|_p) $. Therefore, $ E $ is separable if and only if $ C_c(I) $ is separable. But by Lemma \ref{lemma4.1}-\ref{4.1b}, this holds if and only if $ I $ is countable, or equivalently, the basis $ \mathcal{B} $ is countable. Finally, since all Hamel bases have the same cardinality (see \cite[Theorem 2.42.]{Knapp}), the result follows.  
\end{proof}

The case $ p = \infty $ can be proved using the same ideas as before. However, we will instead take a more interesting approach. Below, we will prove a theorem stating that if the norm on a vector space behaves in a certain way with respect to the coordinates of a Hamel basis, then the space can only be Banach in finite dimensions.

\begin{theorem}\label{theorem4.3}
	Let $(E, \|\cdot\|)$ be a normed space, and let $\mathcal{B} = \{e_k\}_{k \in I}$ be a Hamel basis for $E$, where $I$ is its index set. Suppose that for each $k \in I$, there exists a constant $M_k > 0$ such that  
	\begin{equation}\label{5}  
		|u_k| \leq M_k \|u\| \quad \forall u \in E,  
	\end{equation}  
	where $u$ has the representation  
	$$  
	u = \sum_{i \in J} u_i e_i + u_k e_k, \quad \text{with } J \subset I \text{ finite and } k \notin J.  
	$$  
	Then, $(E, \|\cdot\|)$ is a Banach space if and only if $\dim(E) < \infty$.
\end{theorem}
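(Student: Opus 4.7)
The ``if'' direction is standard, since every finite-dimensional normed space is automatically Banach. For the converse I would proceed by contraposition: assuming $\dim(E) = \infty$ together with hypothesis \eqref{5}, the plan is to construct a Cauchy sequence in $(E, \|\cdot\|)$ whose would-be limit cannot lie in $E$.

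The first step is to reinterpret hypothesis \eqref{5} as a continuity statement. For each $k \in I$, let $\pi_k: E \to \mathbb{K}$ send $u$ to its coefficient at $e_k$ in its unique Hamel expansion relative to $\mathcal{B}$ (so $\pi_k(u) = 0$ when $k$ lies outside the support of $u$). Linearity of $\pi_k$ is immediate from the uniqueness of Hamel coordinates, and \eqref{5} yields $|\pi_k(u)| \leq M_k \|u\|$ for every $u \in E$ (the case $\pi_k(u)=0$ being trivial). Hence each $\pi_k$ is a bounded, and therefore continuous, linear functional on $(E, \|\cdot\|)$.

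Next, since $\dim(E) = \infty$, I would pick pairwise distinct basis vectors $e_{k_1}, e_{k_2}, \ldots \in \mathcal{B}$ and form the partial sums
\[
v_m = \sum_{n=1}^{m} \frac{e_{k_n}}{2^n \, \|e_{k_n}\|}, \qquad m \in \mathbb{N}.
\]
A direct triangle-inequality estimate gives $\|v_m - v_j\| \leq \sum_{n=j+1}^{m} 2^{-n}$ for $m > j$, so $(v_m)$ is Cauchy. Assuming for contradiction that $(E, \|\cdot\|)$ is Banach, there exists $v \in E$ with $v_m \to v$. By continuity of $\pi_{k_n}$, for each fixed $n$,
\[
\pi_{k_n}(v) = \lim_{m \to \infty} \pi_{k_n}(v_m) = \frac{1}{2^n \, \|e_{k_n}\|} \neq 0,
\]
since for all $m \geq n$ the coefficient of $e_{k_n}$ in $v_m$ equals $2^{-n}/\|e_{k_n}\|$. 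Thus $v$ would have nonzero Hamel coordinates at infinitely many indices, contradicting the finite support of every element of $E$.

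The main obstacle (though a mild one) is the bookkeeping around the coordinate functionals $\pi_k$: one must confirm their linearity and extract from \eqref{5} a bound valid on the whole of $E$, including the trivial case when $\pi_k(u) = 0$. Once this is in place, the remainder is a clean adaptation of the Cauchy-sequence strategy used for the $p$-norm in the proof of Theorem~\ref{theorem1.1}, replacing the concrete norm $\|\cdot\|_p$ by the continuity of the abstract coordinate functionals supplied by \eqref{5}.
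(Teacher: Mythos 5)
Your proposal is correct and follows essentially the same route as the paper: both reinterpret hypothesis \eqref{5} as continuity of the coordinate functionals $\pi_k$, build the series $\sum_n 2^{-n} e_{k_n}/\|e_{k_n}\|$ over a countable subfamily of the basis, use completeness to obtain a limit in $E$, and then derive a contradiction because the limit would have infinitely many nonzero Hamel coordinates. The only cosmetic difference is that you verify the Cauchy property of the partial sums directly, while the paper invokes absolute convergence; these are interchangeable in a Banach space.
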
 

\begin{remark}\label{remark3}
	Hypothesis \eqref{5} is equivalent to requiring that the coordinate functions are continuous with respect to the norm $\|\cdot\|$. Explicitly, for each $k \in I$, consider the coordinate projection $\pi_k: E \to \mathbb{K}$ given by $\pi_k(u) = u_k$. Then, $\pi_k$ is continuous if and only if \eqref{5} holds.
\end{remark}

\begin{proof}[{\bf Proof of Theorem \ref{theorem4.3}}]
It suffices to show that if $(E,\|\cdot\|)$ is a Banach space, then $E$ must be finite-dimensional, since the converse follows from the fact that all finite-dimensional spaces are Banach. Assume $E$ is infinite-dimensional. Since $E$ is Banach, any Hamel basis $\mathcal{B}$ must be uncountable. Select a countable subset $\mathcal{B}' = \{e_k\}_{k\in\mathbb{N}} \subset \mathcal{B}$ and define  
$$
u = \sum_{k=1}^{\infty} \frac{1}{2^k} \frac{e_k}{\|e_k\|}.  
$$  
This series is absolutely convergent in the Banach space $E$, hence convergent, and so, $u\in E$. Now, let 
$$
u_n = \sum_{k=1}^n \frac{1}{2^k} \frac{e_k}{\|e_k\|}
$$
denote its partial sums, which satisfy
$$
u_n \to u\quad\text{in}\quad E.
$$ 
By hypothesis \eqref{5} and Remark \ref{remark3}, each coordinate projection $\pi_j: E \to \mathbb{K}$ is continuous. Therefore, for every $j \in \mathbb{N}$, one has  
$$
\pi_j(u) = \lim_{n\to \infty} \pi_j(u_n) = \lim_{n\to \infty} \sum_{k=1}^n \frac{1}{2^k \|e_k\|} \pi_j(e_k) = \frac{1}{2^j \|e_j\|}.
$$  
This shows that $u$ has infinitely many non-zero coordinates with respect to $\mathcal{B}$, contradicting the definition of a Hamel basis (where every vector has a finite linear combination). Thus, $E$ must be finite-dimensional.	
\end{proof}

By Theorem \ref{theorem4.3}, it immediately follows that the coordinate functionals on a Banach space are continuous if and only if the space has finite dimension.

\begin{proof}[{\bf Proof of Theorem \ref{theorem1.1} for the case $p=\infty$.}]
Let $ E $ be a nontrivial vector space, and let $ \mathcal{B} = \{e_k\}_{k \in I} $ be a Hamel basis for $ E $ with $I$ being $I$ the index set. By definition of norm $\| \cdot \|_{\infty} $ in \eqref{infty-norm}, for each $ k \in I $, we have  
$$
|u_k| \leq \|u\|_{\infty}, \quad \forall\,\, u \in E.  
$$  
Applying Theorem \ref{theorem4.3}, we conclude that $ (E, \|\cdot\|_\infty) $ is a Banach space if and only if $ \dim(E) < \infty $, which concludes the proof.	
\end{proof}

\begin{example}  
The vector space of polynomials $ \mathcal{P}(\mathbb{R}) $ equipped with the norm defined by  
$$
\|p\| = \max_{j} |\alpha_j|, \quad \text{where} \quad p(t) = \alpha_0 + \alpha_1 t + \dots + \alpha_n t^n,  
$$  
is not a Banach space. Indeed, consider the Hamel basis $ \{1, t, t^2, \dots\} $ for $ \mathcal{P}(\mathbb{R}) $. Clearly, we have $ \|p\| = \|p\|_{\infty} $. Since $ \mathcal{P}(\mathbb{R}) $ is infinite-dimensional, it follows by the contrapositive of Theorem \ref{theorem1.1} that $ (E, \|\cdot\|) $ cannot be a Banach space.  
\end{example}

\vspace{0.2cm}

\noindent {\bf Declarations:} Not applicable.


\end{document}